\newtheorem{thm}{Theorem}[section]
\newtheorem{lem}[thm]{Lemma}
\newtheorem{prop}[thm]{Proposition}
\newtheorem{defn}[thm]{Definition}
\newtheorem{rem}[thm]{Remark}
\theoremstyle{definition}
\numberwithin{equation}{section}
\renewcommand{\Re}{\hbox{Re}\,}
\newcommand{\C}{\mathbb{C}}
\newcommand{\N}{\mathbb{N}}
\newcommand{\R}{\mathbb{R}}
\newcommand{\supp}{\operatorname{supp}}
\newcommand{\Z}{\mathbb{Z}}
\def\tilde{\widetilde}
\def \bfo {\begin {eqnarray*} }
\def \efo {\end {eqnarray*} }
\def \ba {\begin {eqnarray*} }
\def \ea {\end {eqnarray*} }
\def \beq {\begin {eqnarray}}
\def \eeq {\end {eqnarray}}
\def \supp {\hbox{supp }}
\def \dist {\hbox{dist}}
\def \p {\partial}
\def\tilde{\widetilde}
\def \bfo {\begin {eqnarray*} }
\def \efo {\end {eqnarray*} }
\def \ba {\begin {eqnarray*} }
\def \ea {\end {eqnarray*} }
\def \beq {\begin {eqnarray}}
\def \eeq {\end {eqnarray}}
\def \supp {\hbox{supp }}
\def \dist {\hbox{dist}}
\def \p {\partial}
\begin{document}

\title[Calder\'{o}n Problem for quasilinear conductivities]{The Calder\'{o}n inverse problem for isotropic quasilinear conductivities}

\author[C\^{a}rstea]{C\u{a}t\u{a}lin I. C\^{a}rstea}

\address
{School of Mathematics, Sichuan University, Chengdu, Sichuan, 610064, P.R. China} 

\email{catalin.carstea@gmail.com}

\author[Feizmohammadi]{Ali Feizmohammadi}

\address{The Fields Institute for Research in Mathematical Sciences\\
Toronto, Ontario M5T 3J1\\
Canada}

\email{afeizmoh@fields.utoronto.ca}

\author[Kian]{Yavar Kian}

\address
        {Y. Kian, Aix Marseille Univ\\ 
        Universit\'e de Toulon, CNRS\\
        CPT, Marseille, France}

\email{yavar.kian@univ-amu.fr}

\author[Krupchyk]{Katya Krupchyk}
\address
        {K. Krupchyk, Department of Mathematics\\
University of California, Irvine\\
CA 92697-3875, USA }

\email{katya.krupchyk@uci.edu}

\author[Uhlmann]{Gunther Uhlmann}

\address
       {G. Uhlmann, Department of Mathematics\\
       University of Washington\\
       Seattle, WA  98195-4350\\
       USA\\
        and Institute for Advanced Study of the Hong Kong University of Science and Technology}
\email{gunther@math.washington.edu}

\maketitle
\begin{abstract}
We prove a global uniqueness result for the Calder\'{o}n inverse problem for a general quasilinear isotropic conductivity equation on a bounded open set with smooth boundary in dimension $n\ge 3$.  Performing higher order linearizations of the nonlinear Dirichlet--to--Neumann map, we reduce the problem of the recovery of the differentials of the quasilinear conductivity, which are symmetric tensors,  to a completeness property for certain anisotropic products of solutions to the linearized equation. The completeness property is established using complex geometric optics solutions to  the linearized conductivity equation, whose amplitudes concentrate near suitable two dimensional planes.
\end{abstract}

\section{Introduction}

Let $\Omega \subset \R^n$, $n \geq 3$,  be a  bounded open set with $C^\infty$ boundary, and let us start by considering conductivity functions 
$$ \gamma: \overline{\Omega} \times \C \times \C^n\to \C,$$
that satisfy the following two assumptions:
\begin{enumerate}
\item[(H1)]{$0<\gamma(\cdot,0,0) \in C^{\infty}(\overline{\Omega})$,}
\item[(H2)]{the map $\C\times \C^n \ni (\rho,\mu) \to\gamma(\cdot,\rho,\mu)$ is holomorphic with values in  the H\"{o}lder space $C^{1,\alpha}(\overline{\Omega})$ for some $\alpha \in (0,1)$. }  
\end{enumerate} 
Given a conductivity function $\gamma$ as above, we consider the boundary value problem
\begin{equation}
\label{pf0}
\begin{aligned}
\begin{cases}
\nabla\cdot\left(\gamma(x,u,\nabla u)\nabla u\right)=0\,\quad &\text{in $\Omega$},
\\
u= f\,\quad &\text{on $\p \Omega$.}\\
\end{cases}
    \end{aligned}
\end{equation}
Arguing as in \cite[Appendix B]{KKU20},  we see that under the assumptions (H1) and (H2), there exist  $\delta>0$ and $C>0$ such that given any 
$$ f \in B_{\delta}(\p \Omega):=\{f \in C^{2,\alpha}(\p \Omega)\,:\, \|f\|_{C^{2,\alpha}(\p \Omega)}< \delta\},$$   
the problem \eqref{pf0} has a unique solution $u=u_{f} \in C^{2,\alpha}(\overline{\Omega})$ satisfying 
$\|u\|_{C^{2,\alpha}(\overline{\Omega})}< C\delta$. We define the Dirichlet-to-Neumann map associated with \eqref{pf0} via the mapping
\begin{equation}
\label{eq_int_DNmap}
 \Lambda_{\gamma}(f)= (\gamma(x,u,\nabla u)\p_\nu u)|_{\p \Omega},
\end{equation}
where $f \in B_{\delta}(\p \Omega)$, $u=u_{f}$,  and $\nu$ is the unit outer normal to $\p \Omega$. 

Our inverse problem can now be cast as follows: does the knowledge of the Dirichlet-to-Neumann map $\Lambda_\gamma$ uniquely determine a general quasilinear conductivity $\gamma$? Note that if the conductivity is assumed to be independent of $u$ and $\nabla u$, then this is the well known Calder\'{o}n problem for isotropic conductivities introduced in \cite{Ca80}, which is motivated by applications where one is interested in determining the isotropic conductivity of a medium $\Omega$ by applying voltage on the boundary $\p\Omega$ and subsequently measuring the induced current flux on $\partial\Omega$. This problem, which is also called the Electrical Impedance Tomography (EIT) problem, see \cite{U},  has many applications in different scientific branches including medical imaging by improving the early detection of breast cancer, see \cite{ZG}, as well as in seismology and geophysical exploration, see \cite{ZK}. 

In this paper we consider the EIT problem in the more general context where the unknown conductivity is not only depending on the space variable $x\in\Omega$ but that it also depends on the solution and its gradient. This corresponds to a general formulation of the Calder\'{o}n problem, where the space-dependent conductivity function is replaced by a more general quasilinear term. Beside these motivations, we recall that the recovery of a general quasilinear conductivity corresponds to an open problem whose investigation started in \cite{Su96,SuUh97}, see also \cite[Section 1.1]{MU20} for more details.

In this paper  we give an affirmative answer to the Calder\'on problem for quasilinear conductivities that satisfy (H1), (H2). Precisely, we prove the following theorem as our first main result.
\begin{thm}
\label{main_thm}
Let $\Omega \subset \R^n$, $n\geq 3$, be a bounded open set with $C^\infty$ boundary. Assume that $\gamma_1,\gamma_2: \overline{\Omega}\times \C\times \C^n\to \C$ satisfy (H1) and (H2). Suppose that there holds:
\begin{equation}
\label{eq_int_DNmap_equality}
 \Lambda_{\gamma_1}(f)=\Lambda_{\gamma_2}(f),\qquad \forall\, f \in B_{\delta}(\p \Omega).
\end{equation}
Then,
$$ \gamma_1=\gamma_2 \quad \text{in}\quad \overline{\Omega}\times \C\times \C^n.$$
\end{thm}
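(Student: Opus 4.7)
By (H2) each $\gamma_i(x,\rho,\mu)$ is holomorphic in $(\rho,\mu)\in\C\times\C^n$, so it suffices to establish the equalities of all Taylor coefficients at $(\rho,\mu)=(0,0)$:
\begin{equation*}
\partial_\rho^j\partial_\mu^\beta\gamma_1(\cdot,0,0) = \partial_\rho^j\partial_\mu^\beta\gamma_2(\cdot,0,0) \quad \text{on }\overline{\Omega}
\end{equation*}
for every $j\in\N$ and multi-index $\beta\in\N^n$; at fixed $(j,\beta)$ this is an equality of symmetric $|\beta|$-tensor fields. The plan is an induction on $N := j+|\beta|$, driven by higher-order linearizations of the Dirichlet-to-Neumann map. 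For $N=0$, the first linearization of $\Lambda_\gamma$ at $f=0$ equals the DN map of the linear conductivity equation $\nabla\cdot(\gamma(\cdot,0,0)\nabla v)=0$, since $v_1:=\partial_\epsilon u_{\epsilon f_1}|_{\epsilon=0}$ satisfies that equation with trace $f_1$; the classical Sylvester--Uhlmann theorem in dimension $n\ge 3$ then yields $\gamma_1(\cdot,0,0)=\gamma_2(\cdot,0,0)=:\gamma_0$.

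For the inductive step, assuming the claim for all orders strictly less than $N$, I would take boundary data $f=\sum_{k=1}^N \epsilon_k f_k$ and apply $\partial_{\epsilon_1}\cdots\partial_{\epsilon_N}|_{\epsilon=0}$ to the identity $\Lambda_{\gamma_1}(f)=\Lambda_{\gamma_2}(f)$. The mixed derivatives of $u_\epsilon$ solve linear equations whose source terms are polynomial in lower-order mixed derivatives and in the Taylor coefficients of $\gamma$ at $(0,0)$; by the inductive hypothesis, the contributions from coefficients of order $<N$ cancel between the two problems. Testing against an auxiliary solution $v_0$ of $\nabla\cdot(\gamma_0\nabla v_0)=0$ via Green's identity, the equality of DN maps then collapses to an integral identity of the schematic form
\begin{equation*}
\int_\Omega \sum_{j+|\beta|=N} c_{j,\beta}(x)\cdot Q_{j,\beta}\bigl(v_0,\ldots,v_N;\nabla v_0,\ldots,\nabla v_N\bigr)(x)\,dx = 0,
\end{equation*}
where $c_{j,\beta}:=(\partial_\rho^j\partial_\mu^\beta\gamma_1-\partial_\rho^j\partial_\mu^\beta\gamma_2)(\cdot,0,0)$ is the unknown symmetric tensor, the $Q_{j,\beta}$ are explicit multilinear expressions, and the $v_0,\ldots,v_N$ range over arbitrary solutions of $\nabla\cdot(\gamma_0\nabla v)=0$ in $\Omega$.

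The main obstacle, and the technical heart of the argument, is a \emph{completeness of anisotropic products} statement: the above identity should force each $c_{j,\beta}$ to vanish pointwise. Since gradients of the $v_k$ enter $Q_{j,\beta}$, this is a genuinely tensorial problem, sensitive to the directions of those gradients and not only to their magnitudes, which rules out a direct reduction to the usual density of products of harmonic-type solutions. To handle it I would construct complex geometric optics solutions
\begin{equation*}
v_k(x) = e^{\zeta_k\cdot x}\bigl(a_k(x)+r_k(x)\bigr), \qquad \zeta_k\in\C^n,\ \zeta_k\cdot\zeta_k=0,\ |\zeta_k|\to\infty,
\end{equation*}
to the linearized conductivity equation with $\gamma_0$, with the frequencies arranged so that $\sum_k\zeta_k=0$ and the total exponential phase cancels in the integral. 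The new ingredient, matching the description in the abstract, is to choose the amplitudes $a_k$ so that they concentrate near a prescribed two-dimensional plane through a point $x_0\in\Omega$ and carry prescribed directional data within that plane. In the limit $|\zeta_k|\to\infty$, followed by localization of the plane to $x_0$, the leading contribution of the integral extracts a specific component of $c_{j,\beta}(x_0)$ determined by the geometry of the $2$-plane and by the directions built into the $\zeta_k$ and $a_k$. Varying the plane and the admissible directions within it recovers every component of every $c_{j,\beta}(x_0)$ for $j+|\beta|=N$ and every $x_0\in\Omega$, closing the induction and proving the theorem.
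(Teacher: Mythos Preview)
Your outline matches the paper's strategy: higher-order linearization, induction on the order of the Taylor coefficient, reduction to an integral identity in $m+2$ solutions of the linearized equation, and recovery of the unknown symmetric tensor via CGO solutions whose amplitudes concentrate near two-dimensional planes. So the architecture is right.

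Two points where your sketch is too loose to go through as written. First, the localization mechanism is not ``localize the plane to $x_0$''---a plane cannot be shrunk to a point. The paper picks an \emph{admissible pair} $(\zeta,\tilde\zeta)$ with $\zeta\cdot\zeta=\tilde\zeta\cdot\tilde\zeta=0$, $\mathrm{Re}\,\zeta=\mathrm{Re}\,\tilde\zeta$, and $\mathrm{Im}\,\zeta,\mathrm{Im}\,\tilde\zeta$ linearly independent, and splits the $m+2$ CGO solutions so that some carry amplitudes concentrated near the $2$-plane $\Pi_\zeta=\mathrm{span}\{\mathrm{Re}\,\zeta,\mathrm{Im}\,\zeta\}$ through $p$ and others near $\Pi_{\tilde\zeta}$. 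The \emph{product} of amplitudes then concentrates near the intersection $\Pi_\zeta\cap\Pi_{\tilde\zeta}$, which is the \emph{ray} $\{p+t\,\mathrm{Re}\,\zeta\}$. An extra oscillatory factor $e^{i\sigma\zeta\cdot(x-p)}$ in the amplitudes and a Fourier inversion in $\sigma$ along this ray finally give the pointwise value at $p$. Your phrase ``a prescribed two-dimensional plane'' (singular) misses this two-plane-to-ray reduction, which is the actual mechanism.

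Second, once you have obtained $\sum T^{j_1\dots j_m}\zeta_{j_1}\cdots\zeta_{j_{m-1}}\tilde\zeta_{j_m}=0$ for all admissible $(\zeta,\tilde\zeta)$, this does \emph{not} by itself force $T=0$ when $m\ge 3$, because the constraint $\zeta\cdot\zeta=0$ is nontrivial (e.g.\ $T^{j_1\dots j_m}=\delta_{j_1 j_2}$ survives). The paper handles this by an inner induction: it reinserts general solutions $u$ into all but two slots of the integral identity, peeling off one $\zeta$ at a time, until only $\zeta_{j_1}\tilde\zeta_{j_2}$ remain contracted and the other slots are free. Only then does varying $\zeta,\tilde\zeta$ and polarizing kill the tensor. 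Your final paragraph asserts that ``varying the plane and the admissible directions'' recovers every component, but you should be aware that a separate argument is needed here. Relatedly, the paper isolates the $\partial_\rho$-coefficients from the $\partial_\mu$-coefficients at each order by substituting constant solutions $u_l\equiv 1$ into suitable slots, a step your schematic identity absorbs but does not make explicit.
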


Let us proceed to describe the main ideas of the proof of Theorem~\ref{main_thm}. Since $\gamma_1$ and $\gamma_2$ satisfy (H2), letting $\lambda=(\rho,\mu)=(\lambda_0,\lambda_1,\dots,\lambda_n)\in \C\times \C^n$, we may write by Taylor's formula,
\begin{equation}
\label{gamma_k_form}
\gamma_j(x,\lambda) = \sum_{k=0}^{\infty} \frac{1}{k!}\gamma_j^{(k)}(x,0;\underbrace{\lambda, \dots,\lambda}_{k\text{ times}}), \quad x\in \Omega, \quad j=1,2.
\end{equation}
Here $\gamma_j^{(k)}(x,0)$ is the $k$th differential of the holomorphic function $\lambda\mapsto \gamma_j(x,\lambda)$ at $\lambda=0$, which is a symmetric tensor of rank $k$, given by 
\begin{equation}
\label{gamma_k_form_2}
\gamma_j^{(k)}(x,0;\lambda, \dots,\lambda)= \sum_{j_1,\ldots,j_k=0}^n(\p_{\lambda_{j_1}} \dots \p_{\lambda_{j_k}}\gamma_j)(x,0)\lambda_{j_1}\dots\lambda_{j_k}, \quad x\in \Omega.
\end{equation}
The power series in \eqref{gamma_k_form} converges in $C^{1,\alpha}(\overline{\Omega})$ topology.  First, performing the first order linearization of the Dirichlet problem \eqref{pf0} and the Dirichlet--to--Neumann map \eqref{eq_int_DNmap}, we obtain inverse boundary problems for the linear conductivity equations with conductivities $0<\gamma_j(x,0)\in C^\infty(\overline{\Omega})$, and relying on \cite[Theorem 0.1]{SU87}, we conclude 
$\gamma_1(\cdot,0)= \gamma_2(\cdot,0)$ in $\Omega$.  Using the $m$th order linearization of \eqref{pf0} and \eqref{eq_int_DNmap}, $m\ge 2$,  we reduce the proof of the equality of tensors of rank $m-1$, 
\[
\gamma_1^{(m-1)}(\cdot,0)=\gamma_2^{(m-1)}(\cdot,0)\quad \text{in}\quad \Omega,
\]
to the completeness property of certain anisotropic products of solutions to the linearized equation, claimed in Proposition~\ref{prop1} below. The idea of higher order linearizations was introduced in the context of wave equations in \cite{KLU18} and later adapted to elliptic equations in \cite{FO20,LLLSa20}, see also \cite{Su96,SuUh97} for a second order linearization technique.  In the statement of Proposition~\ref{prop1}, $\pi(m+1)$ stands for the set of all distinct permutations of $\{1,\ldots,m+1\}$. Also, given any two vectors $v,w \in \C^n$, the notation $v\cdot w$ stands for the bilinear extension to $\C^n$ of the Euclidean inner product on $\R^n$, i.e. $v\cdot w = \sum_{j=1}^n v_j w_j.$
\begin{prop}
\label{prop1}
Let $\Omega \subset \R^n$, $n\geq 3$, be a  bounded open set with $C^\infty$ boundary. Let $0<\gamma_0 \in C^{\infty}(\overline{\Omega})$. Let $m\in \N$ and let $T$ be a continuous function on $\overline{\Omega}$ with values in the space of symmetric tensors of rank $m$. Suppose that
\begin{equation}
\label{int_iden_density}
\begin{aligned}
\sum_{(l_1,\dots, l_{m+1})\in \pi(m+1)}\sum_{j_1,\dots, j_{m}=0}^n \int_\Omega
  T^{j_1\dots j_m}(x)
   (u_{l_1},\nabla u_{l_1})_{j_1}\dots (u_{l_{m}},\nabla u_{l_{m}})_{j_{m}} \\
   \nabla u_{l_{m+1}}\cdot \nabla u_{m+2}dx=0,
\end{aligned}
\end{equation}
for all  $u_l\in C^\infty(\overline{\Omega})$ solving $\nabla\cdot (\gamma_0 \nabla u_l)=0$ in $\Omega$, $l=1,\dots, m+2$. Then $T$ vanishes identically on $\Omega$. Here $(u_{l},\nabla u_{l})_{j}$, $j=0, 1, \dots, n$, stands for the $j$th component of the vector $(u_{l},\p_{x_1} u_{l}, \dots, \p_{x_n} u_{l})$, and in particular, $(u_{l},\nabla u_{l})_{0}=u_{l}$. 
\end{prop}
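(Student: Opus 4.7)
The plan is to establish the vanishing of $T$ by testing \eqref{int_iden_density} against complex geometric optics (CGO) solutions to $\nabla\cdot(\gamma_0\nabla u)=0$. Via the classical substitution $u=\gamma_0^{-1/2}v$, this equation becomes the Schr\"odinger equation $(-\Delta+q)v=0$ with $q=\gamma_0^{-1/2}\Delta\gamma_0^{1/2}$, which admits CGO solutions of the form
\[
v=e^{\zeta\cdot x/h}(a+h r_h),\qquad \zeta\in\C^n,\quad \zeta\cdot\zeta=0,
\]
where $a$ solves a first-order transport equation $\zeta\cdot\nabla a=0$ and $r_h$ is a remainder of order $O(1)$ in $L^2$. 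Following the approach indicated in the abstract, I would choose $a$ in a Gaussian-beam style, so that $|a|^2$ concentrates near a prescribed two-dimensional plane $\Pi\subset\R^n$ passing through any chosen interior point $x_0\in\Omega$.

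Fixing $x_0\in\Omega$ and $\xi\in\R^n$, the next step is to select $m+2$ complex vectors $\zeta_1,\ldots,\zeta_{m+2}\in\C^n$ of size $\sim 1/h$, each satisfying $\zeta_l\cdot\zeta_l=0$ and with real and imaginary parts lying in the common 2-plane $\Pi$, so that $\sum_{l=1}^{m+2}\zeta_l=-i\xi$. Plugging the corresponding CGO solutions $u_l=\gamma_0^{-1/2}v_l$ into \eqref{int_iden_density}, and using that $\nabla u_l = h^{-1}\zeta_l u_l + O(1)$, each factor $(u_l,\nabla u_l)_{j}$ contributes a leading $h^{-1}\zeta_{l,j}$ when $j\in\{1,\dots,n\}$ and a leading $O(1)$ factor (namely $u_l$) when $j=0$. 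Partitioning the index sum by the set $S\subset\{1,\dots,m\}$ of indices for which $j_k=0$, the left-hand side of \eqref{int_iden_density} decomposes into pieces of order $h^{-(m+2-|S|)}$, plus $O(h)$ corrections coming from $r_h$.

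I would then recover $T$ by induction on $|S|$. At the leading order $|S|=0$, multiplying by $h^{m+2}$, using that $\prod_l a_l \cdot e^{-i\xi\cdot x}$ concentrates near $\Pi$, and passing to $h\to 0$, the identity \eqref{int_iden_density} yields
\[
\sum_{(l_1,\dots,l_{m+1})\in\pi(m+1)}\sum_{j_1,\dots,j_m=1}^{n} T^{j_1\dots j_m}(x_0)\,\zeta_{l_1,j_1}\cdots\zeta_{l_m,j_m}(\zeta_{l_{m+1}}\cdot\zeta_{m+2})=0,
\]
valid for all admissible choices of $\zeta_l$. Varying $x_0$, the plane $\Pi$, and the independent directions available within $\Pi$ on the isotropic cone $\{\zeta\cdot\zeta=0\}$, the resulting linear conditions should span enough of the space of symmetric rank-$m$ tensors to force all purely spatial components $T^{j_1\dots j_m}$ (with $j_k\ge 1$) to vanish identically. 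The inductive step then subtracts the already-vanishing contribution and examines the next subleading order in $h$, producing analogous identities for $|S|=1,2,\dots$, until every component of $T$ is killed.

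The principal obstacle is combinatorial and algebraic: the symmetrization over $\pi(m+1)$, combined with the symmetry of $T$ and the asymmetric role played by $u_{m+2}$ (which enters only via $\nabla u_{m+2}\cdot\nabla u_{l_{m+1}}$), imposes a rigid structure on the test tensors produced by the CGO phases, and one must verify that varying the $\zeta_l$'s within a common 2-plane supplies sufficiently many independent tests to invert it. The choice of two-dimensional (as opposed to one-dimensional geodesic) concentration of the amplitudes is crucial here: it enlarges the family of admissible complex directions on the isotropic cone enough to resolve the anisotropy of $T$, while still leaving the remainder estimates for $r_h$ under control.
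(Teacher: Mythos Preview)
Your overall architecture is in the right spirit but contains a genuine gap at the point you flag as ``the principal obstacle,'' and the geometry you set up is not the one that resolves it.

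First, your leading-order identity does not determine the spatial part of $T$. With all $\zeta_l$ constrained to a single $2$-plane $\Pi$ and to the isotropic cone $\zeta\cdot\zeta=0$, the only available complex directions in $\Pi$ are scalar multiples of $e_1\pm ie_2$; so after symmetrization you are testing $T$ only against products of two fixed null vectors. Even varying $\Pi$, this family is too small once $m\ge 3$: for instance $T^{j_1\dots j_m}=\delta_{j_1j_2}$ is annihilated by every such test because $\zeta\cdot\zeta=0$, yet is nonzero. The paper confronts exactly this obstruction (see the Remark after Lemma~4.1) and circumvents it by two devices you are missing. Geometrically, it uses \emph{two} distinct $2$-planes $\Pi_\zeta$ and $\Pi_{\tilde\zeta}$ sharing the common real direction $\mathrm{Re}\,\zeta=\mathrm{Re}\,\tilde\zeta$ but with linearly independent imaginary parts; the product of the concentrated amplitudes then localizes to the \emph{ray} $\Pi_\zeta\cap\Pi_{\tilde\zeta}$, and a one-dimensional Fourier inversion gives pointwise information. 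Algebraically, for $m\ge 3$ the paper does not try to recover $T$ from the pure CGO limit alone: it runs an internal induction (Lemma~4.2) in which some of the $u_l$ are kept as an \emph{arbitrary} solution $u$ while the others are CGO, producing identities of the form $\sum T^{j_1\dots j_m}\zeta_{j_1}\tilde\zeta_{j_2}\partial_{x_{j_3}}u\cdots\partial_{x_{j_m}}u=0$; only then does a final CGO substitution plus polarization (Lemma~4.3) kill $T$. Your proposal to ``vary the independent directions within $\Pi$'' cannot substitute for this, because there simply are not enough such directions.

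Second, your plan to recover the components with some $j_k=0$ by peeling off subleading powers of $h$ is unnecessarily delicate and not how the paper proceeds. Since constants solve $\nabla\cdot(\gamma_0\nabla u)=0$, setting $u_{m+1}=\dots=u_{s+1}=1$ in \eqref{int_iden_density} collapses the identity to the analogous one for the rank-$(s-1)$ subtensor $T^{j_1\dots j_{s-1}0\dots 0}$, and a straightforward outer induction on $m$ disposes of all mixed components before one ever touches the purely spatial part. This avoids any asymptotic bookkeeping for the $j_k=0$ terms.
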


In the case when $m=1$, the proof of Proposition~\ref{prop1} basically follows from  a polarization trick and  the fact that 
\[
\text{span}\{\gamma_0 \nabla v_1\cdot \nabla v_2: v_j\in C^\infty(\overline{\Omega}), \nabla  \cdot (\gamma_0\nabla v_j)=0, j=1,2\}
\]
is dense in $L^2(\Omega)$, see \cite[Proposition 3.1]{KLassSil20}. In the case when $m\ge 2$, we observe that there are at least four solutions in the integral identity \eqref{int_iden_density}, and we shall use crucially this observation. To explain the idea, let $m=2$.  We pick any point $p\in \Omega$, and any vectors $\zeta,\tilde \zeta\in\C^n$ such that $\zeta\cdot\zeta=\tilde \zeta\cdot\tilde \zeta=0$, $\text{Re}\,\zeta=\text{\Re}\,\tilde \zeta$,  $|\text{Re}\,\zeta|=1$, and $\text{Im}\,\zeta$, $\text{Im}\,\tilde\zeta$ are linearly independent, and test \eqref{int_iden_density} against two complex geometric optics (CGO) solutions, whose amplitudes are localized near the two dimensional plane passing through the point $p$ and spanned by $\text{Re}\, \zeta$,  $\text{Im}\, \zeta$, and two more CGO solutions, whose amplitudes are localized near the two dimensional plane passing through the point $p$ and spanned by $\text{Re}\, \zeta$,  $\text{Im}\, \tilde \zeta$, see \cite{GU01} for similar ideas.  Thus, the product of amplitudes of four such solutions is localized to the ray
\[
\{x\in\R^n: x=p+t\text{Re}\,\zeta,\ t\in\R\},
\]
leading to the fact that  the Fourier transform of the function 
\[
t\mapsto \sum_{j_1,j_2=1}^n T^{j_1j_2}(p+t\text{Re}\,\zeta)\zeta_{j_1}\tilde \zeta_{j_2}
\]
along this ray vanishes. Taking $t=0$, recalling that $p\in \Omega$ is arbitrary, and making suitable choices for vectors $\zeta$ and $\tilde \zeta$, we show that the tensor $T=0$ in $\Omega$.

Let us remark that the proof of the well-posedness of the Dirichlet problem \eqref{pf0} as well as of holomorphic dependence of the solution $u_f$ on the boundary data $f \in B_{\delta}(\p \Omega)$, which is crucial for the higher order linearizations, is established by means of the implicit function theorem for holomorphic maps between complex Banach spaces, see  \cite[Appendix B]{KKU20}. Next, we would like to consider quasilinear conductivities $\gamma(x,\rho,\mu)$ which no longer depend holomorphically on $\rho$ and $\mu$. In doing so, we have proved in Theorem \ref{thm_well-posedness} the well-posedness of the Dirichlet problem \eqref{pf0} relying on the implicit function theorem for $C^\infty$ maps between real Banach spaces,  and in view of Theorem \ref{thm_well-posedness}, we first assume that  the function $\gamma: \overline{\Omega} \times \R \times \R^n\to \R$ satisfies the following conditions:
\begin{enumerate}
\item[(A1)]{$0<\gamma(\cdot,0,0) \in C^{\infty}(\overline{\Omega})$,} 
\item[(A2)] {the map $\R\times \R^n \ni (\rho,\mu) \to\gamma(\cdot,\rho,\mu)$ is $C^\infty$ with values in the H\"{o}lder space $C^{1,\alpha}(\overline{\Omega};\R)$ for some $\alpha \in (0,1)$.}  
\end{enumerate} 
Thanks to Theorem \ref{thm_well-posedness},  under the assumptions (A1) and (A2), there exist  $\delta>0$ and $C>0$ such that given any 
$$ f \in B_{\delta}(\p \Omega,\R):=\{f \in C^{2,\alpha}(\p \Omega;\R)\,:\, \|f\|_{C^{2,\alpha}(\p \Omega;\R)}< \delta\},$$   
the problem \eqref{pf0} admits a unique solution $u=u_{f} \in C^{2,\alpha}(\overline{\Omega};\R)$ satisfying 
$\|u\|_{C^{2,\alpha}(\overline{\Omega};\R)}< C\delta$. Associated to \eqref{pf0}, we define the Dirichlet-to-Neumann map $\Lambda_{\gamma}$ as in \eqref{eq_int_DNmap}.  Our second main result is as follows. 

\begin{thm}
\label{main_thm_2}
Let $\Omega \subset \R^n$, $n\geq 3$, be a bounded open set with $C^\infty$ boundary. Assume that $\gamma_1,\gamma_2: \overline{\Omega} \times \R \times \R^n\to \R$ satisfy (A1) and (A2). If 
\[
\Lambda_{\gamma_1}(f)=\Lambda_{\gamma_2}(f)\qquad \forall\, f \in B_{\delta}(\p \Omega;\R), 
\]
then for all $|\alpha|\ge 0$, we have  
\[
 \p^{\alpha}_{\rho,\mu}\gamma_1(\cdot,0,0)=\p^{\alpha}_{\rho,\mu}\gamma_2(\cdot,0,0) \quad \text{in}\quad \overline{\Omega}.
 \]
\end{thm}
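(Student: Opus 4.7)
The strategy mirrors the one for Theorem~\ref{main_thm}, with the holomorphic Taylor expansion of $\gamma_j$ replaced by the formal Taylor expansion at the origin of the $C^\infty$ map $(\rho,\mu)\mapsto\gamma_j(\cdot,\rho,\mu)$, and holomorphic dependence of the solution on boundary data replaced by real $C^\infty$ Fr\'echet differentiability. The plan is to perform successive Fr\'echet linearizations of $f\mapsto\Lambda_{\gamma_j}(f)$ at $f=0$ and prove by induction on $m\ge 1$ that $\p^\alpha_{\rho,\mu}\gamma_1(\cdot,0,0)=\p^\alpha_{\rho,\mu}\gamma_2(\cdot,0,0)$ in $\overline{\Omega}$ for every multi-index $\alpha$ with $|\alpha|=m-1$. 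The conclusion sought is precisely the collection of these equalities, so no analyticity in $(\rho,\mu)$ is actually required.

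By Theorem~\ref{thm_well-posedness}, for $\delta>0$ sufficiently small the map $B_\delta(\p\Omega;\R)\ni f\mapsto u^{(j)}_f\in C^{2,\alpha}(\overline{\Omega};\R)$ is $C^\infty$; in particular, for boundary data of the form $f(s)=\sum_{l=1}^M s_l f_l$ with $f_l\in C^{2,\alpha}(\p\Omega;\R)$ and $s$ in a neighborhood of $0\in\R^M$, both $u^{(j)}_{f(s)}$ and $\Lambda_{\gamma_j}(f(s))$ depend smoothly on $s$. The case $M=1$ with one derivative recovers the classical linear Dirichlet--to--Neumann map of the conductivity $\gamma_j(\cdot,0,0)$, so the hypothesis together with \cite{SU87} yields $\gamma_1(\cdot,0,0)=\gamma_2(\cdot,0,0)=:\gamma_0$ in $\overline{\Omega}$, establishing the base case $m=1$.

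For the inductive step, assume $\p^\beta_{\rho,\mu}\gamma_1(\cdot,0,0)=\p^\beta_{\rho,\mu}\gamma_2(\cdot,0,0)$ for all $|\beta|\le m-2$, with $m\ge 2$. Taking $M=m$, I would compute $\p^m_{s_1\cdots s_m}\big|_{s=0}(\Lambda_{\gamma_1}-\Lambda_{\gamma_2})(f(s))$; testing the resulting boundary expression against the trace of an additional real-valued solution $v_{m+1}\in C^\infty(\overline{\Omega};\R)$ of $\nabla\cdot(\gamma_0\nabla v_{m+1})=0$ and integrating by parts, exactly as in the reduction from equality of DN maps to Proposition~\ref{prop1} in the proof of Theorem~\ref{main_thm}, all contributions of Taylor tensors of order $\le m-2$ cancel by the induction hypothesis, leaving the identity
\[
\sum_{\sigma\in\pi(m)}\sum_{j_1,\dots,j_{m-1}=0}^n\int_\Omega T^{j_1\dots j_{m-1}}(x)\prod_{k=1}^{m-1}(v_{\sigma(k)},\nabla v_{\sigma(k)})_{j_k}\,\nabla v_{\sigma(m)}\cdot\nabla v_{m+1}\,dx=0,
\]
with $T=c_m\bigl(\gamma_1^{(m-1)}(\cdot,0,0)-\gamma_2^{(m-1)}(\cdot,0,0)\bigr)$ for a nonzero combinatorial constant $c_m$, and with $v_1,\dots,v_m$ the real first-order linearizations associated to boundary data $f_1,\dots,f_m$.

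Since $\nabla\cdot(\gamma_0\nabla\cdot)$ has real coefficients and the identity above is multilinear in $v_1,\dots,v_{m+1}$, it extends by $\C$-linearity to arbitrary complex-valued smooth solutions of $\nabla\cdot(\gamma_0\nabla u)=0$. Applying Proposition~\ref{prop1}, with the integer there replaced by $m-1$, then forces $T\equiv 0$ in $\Omega$, i.e.\ $\gamma_1^{(m-1)}(\cdot,0,0)=\gamma_2^{(m-1)}(\cdot,0,0)$ in $\overline{\Omega}$, which closes the induction and gives the equality of all partial derivatives at $(0,0)$. The principal technical obstacle, absent in the holomorphic setting of Theorem~\ref{main_thm}, is the rigorous justification of these higher order linearizations with merely $C^\infty$ (rather than holomorphic) dependence on $(\rho,\mu)$; this is exactly what Theorem~\ref{thm_well-posedness} supplies, via the implicit function theorem for $C^\infty$ maps between real Banach spaces, after which the combinatorial computation of the $m$-th linearization proceeds exactly as in the proof of Theorem~\ref{main_thm}.
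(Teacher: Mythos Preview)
Your proposal is correct and follows essentially the same approach as the paper: higher order linearization (via the $C^\infty$ well-posedness of Theorem~\ref{thm_well-posedness}) to reduce to the integral identity, then extension from real to complex solutions using that $\gamma_0$ is real, followed by Proposition~\ref{prop1}. The paper in fact omits the proof, only remarking on exactly the two points you flagged---the replacement of holomorphic by $C^\infty$ dependence, and the passage from real-valued to complex-valued solutions via the multilinearity of the identity and the reality of the coefficients.
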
 
The proof of Theorem \ref{main_thm_2} follows along the same lines as the proof of Theorem \ref{main_thm}, and therefore, will be omitted. We should only remark that all the integral identities obtained in the proof will be valid for real valued solutions to the linearized conductivity equation 
\begin{equation}
\label{eq_int_100_1}
\nabla \cdot (\gamma(x,0,0)\nabla u)=0\quad \text{in}\quad \Omega.
\end{equation} 
As $\gamma(x,0,0)$ is real valued, given a complex valued solution $u$ to \eqref{eq_int_100_1}, we have  $\text{Re}\, u$ and $\text{Im}\, u$ are also solutions to \eqref{eq_int_100_1}, and therefore, all the integral identities extend to complex valued solutions, see \cite[Lemma 2.1]{LLLSb20}.

Finally, let us consider conductivities $\gamma(x,\rho, \mu)$ which are smooth in $\rho$ but real analytic in $\mu$, and in this case we shall show that we can recover the entire conductivity as in Theorem \ref{main_thm}. Specifically, let $\gamma: \overline{\Omega} \times \R \times \R^n\to \R$ satisfy the following conditions:
\begin{enumerate}
\item[(B1)]{$0<\gamma(\cdot,\cdot,0) \in C^{\infty}(\overline{\Omega}\times\R)$,} 
\item[(B2)] {the map $\R\times \R^n \ni (\rho,\mu) \to\gamma(\cdot,\rho,\mu)$ is $C^\infty$ with values in the H\"{o}lder space $C^{1,\alpha}(\overline{\Omega};\R)$ for some $\alpha \in (0,1)$. Moreover, for each $(x,\rho)\in\Omega\times \R$, the map $\R^n \ni \mu \mapsto\gamma(x,\rho,\mu)\in \R$ is real analytic.}  
\end{enumerate} 
Let $\rho\in \R$ and consider the boundary value problem
\begin{equation}
\label{pf0_smooth_rho}
\begin{aligned}
\begin{cases}
\nabla\cdot\left(\gamma(x,u,\nabla u)\nabla u\right)=0\,\quad &\text{in $\Omega$},
\\
u=\rho+ f\,\quad &\text{on $\p \Omega$.}\\
\end{cases}
    \end{aligned}
\end{equation}
It is established in Theorem \ref{thm_well-posedness} that under the assumptions (B1) and (B2), for each $\rho\in\R$,  there exist  $\delta_\rho>0$ and $C_\rho>0$ such that when
$$ f \in B_{\delta_\rho}(\p \Omega,\R):=\{f \in C^{2,\alpha}(\p \Omega;\R)\,:\, \|f\|_{C^{2,\alpha}(\p \Omega;\R)}< \delta_\rho\},$$   
the problem \eqref{pf0_smooth_rho} has a unique solution $u=u_{\rho, f} \in C^{2,\alpha}(\overline{\Omega};\R)$ satisfying 
$\|u-\rho\|_{C^{2,\alpha}(\overline{\Omega};\R)}< C_\rho\delta_\rho$. Associated to \eqref{pf0_smooth_rho}, we define the Dirichlet-to-Neumann map as follows 
\begin{equation}
\label{eq_int_DNmap_smooth_rho}
 \Lambda_{\gamma}(\rho+f)= (\gamma(x,u,\nabla u)\p_\nu u)|_{\p \Omega},
\end{equation}
where $\rho\in\R$,  $f \in B_{\delta_\rho}(\p \Omega;\R)$, and $u=u_{\rho,f}$.  Our third main result is as follows. 

\begin{thm}
\label{main_thm_3}
Let $\Omega \subset \R^n$, $n\geq 3$, be a bounded open set with $C^\infty$ boundary. Assume that $\gamma_1,\gamma_2: \overline{\Omega}\times \R\times \R^n\to \R$ satisfy (B1) and (B2). If \begin{equation}
\label{eq_int_DNmap_equality_2}
\Lambda_{\gamma_1}(\rho +f)=\Lambda_{\gamma_2}(\rho+f)\qquad \forall \rho\in\R, \quad \forall\, f \in B_{\delta}(\p \Omega;\R).
\end{equation}
Then,
$$ \gamma_1=\gamma_2 \quad \text{in}\quad \overline{\Omega}\times \R\times \R^n.$$
\end{thm}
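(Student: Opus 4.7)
The plan is to carry out the higher order linearization scheme from the proof of Theorem~\ref{main_thm} (equivalently Theorem~\ref{main_thm_2}) about each basepoint $\rho\in\R$, using the fact that the constant function $u\equiv\rho$ solves \eqref{pf0_smooth_rho} with $f=0$, and then to leverage the real analyticity of $\gamma_j$ in $\mu$ to upgrade the identification of all $\mu$-Taylor coefficients at $\mu=0$ to pointwise equality on $\R^n$.

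First, for each $\rho\in\R$, I would linearize \eqref{pf0_smooth_rho} and \eqref{eq_int_DNmap_smooth_rho} about $u\equiv\rho$; note that $\nabla\rho=0$ makes the PDE trivially satisfied, and Theorem~\ref{thm_well-posedness} identifies this as the unique small solution of \eqref{pf0_smooth_rho} when $f=0$. The first order linearization in $f$ yields the linear isotropic conductivity equation $\nabla\cdot(\gamma_j(x,\rho,0)\nabla v)=0$ in $\Omega$ together with equality of the corresponding linear Dirichlet--to--Neumann maps. Since by (B1) the function $\gamma_j(\cdot,\rho,0)$ is positive and smooth on $\overline\Omega$, \cite[Theorem 0.1]{SU87} gives $\gamma_1(\cdot,\rho,0)=\gamma_2(\cdot,\rho,0)$ on $\overline\Omega$ for every $\rho\in\R$.

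Next, for each $m\ge 2$ and each fixed $\rho\in\R$, I would perform the $m$-th order linearization of \eqref{pf0_smooth_rho} and \eqref{eq_int_DNmap_smooth_rho} about $u\equiv\rho$, exactly as in the proof of Theorem~\ref{main_thm}. The only difference is that the chain rule now produces derivatives of $\gamma_j$ evaluated at $(x,\rho,0)$ in place of $(x,0,0)$, since the unperturbed solution $u\equiv\rho$ has value $\rho$ and vanishing gradient. After testing against an auxiliary solution of the linearized equation, one obtains an integral identity of the form appearing in Proposition~\ref{prop1}, with conductivity $\gamma_0=\gamma_1(\cdot,\rho,0)=\gamma_2(\cdot,\rho,0)$ and symmetric tensor of rank $m-1$ given by
\[
T^{j_1\cdots j_{m-1}}(x) = \bigl(\partial_{\lambda_{j_1}}\cdots\partial_{\lambda_{j_{m-1}}}(\gamma_1-\gamma_2)\bigr)(x,\rho,0),\qquad \lambda=(\rho,\mu)\in\R\times\R^n.
\]
The identity holds first for real valued solutions and then, by the splitting trick recalled in the remark after Theorem~\ref{main_thm_2}, for complex valued ones. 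Applying Proposition~\ref{prop1} yields
\[
\partial^\alpha_{(\rho,\mu)}\gamma_1(x,\rho,0)=\partial^\alpha_{(\rho,\mu)}\gamma_2(x,\rho,0),\qquad \forall\, x\in\overline\Omega,\ \rho\in\R,\ \alpha\in\N^{n+1}.
\]

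For the final step, fix $(x,\rho)\in\overline\Omega\times\R$. Hypothesis (B2) makes $\mu\mapsto\gamma_j(x,\rho,\mu)$ real analytic on all of $\R^n$, so each equals its Taylor series at $\mu=0$ on some open neighbourhood of the origin; since the Taylor coefficients agree by the previous step, $\gamma_1(x,\rho,\cdot)$ and $\gamma_2(x,\rho,\cdot)$ coincide on a common neighbourhood of $0\in\R^n$, and hence, by the identity theorem for real analytic functions on the connected set $\R^n$, on all of $\R^n$. This yields $\gamma_1=\gamma_2$ on $\overline\Omega\times\R\times\R^n$, as required. The main point to be verified, and essentially the only genuine difference from the proofs of Theorems~\ref{main_thm} and~\ref{main_thm_2}, is that the $m$-th order linearization about the constant basepoint $u\equiv\rho$ produces an integral identity of precisely the form \eqref{int_iden_density}; this is routine because $\nabla u_0=0$ at the basepoint, so the chain rule yields the same combinatorial structure as in the case $\rho=0$, with the derivatives of $\gamma_j$ merely re-evaluated at $(x,\rho,0)$.
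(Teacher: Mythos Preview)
Your proposal is correct and follows the paper's own strategy: for each fixed $\rho\in\R$, linearize \eqref{pf0_smooth_rho} about the constant solution $u\equiv\rho$, apply \cite[Theorem~0.1]{SU87} at first order and the density proposition at higher orders (extending to complex-valued solutions via the real/imaginary part splitting), and then conclude via the real analyticity of $\gamma_j(x,\rho,\cdot)$ and the identity theorem. The only cosmetic difference is that the paper phrases the higher-order step as a reduction to Proposition~\ref{prop1_smooth_rho} (tensor indices in $\{1,\dots,n\}$ only) rather than to the full Proposition~\ref{prop1} as you do; since the paper explicitly notes that the proof of Proposition~\ref{prop1_smooth_rho} is contained in that of Proposition~\ref{prop1}, this is not a substantive difference.
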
 

\begin{rem}
Comparing Theorem \ref{main_thm_3}, where there is no analyticity assumption in $\rho$ for $\gamma_j(x,\rho,\mu)$,  with Theorem \ref{main_thm}, we note that the assumption (B1) in Theorem \ref{main_thm_3} is stronger than the corresponding assumption (H1) in Theorem \ref{main_thm}, and the requirement \eqref{eq_int_DNmap_equality_2} in Theorem \ref{main_thm_3} is stronger than the corresponding requirement \eqref{eq_int_DNmap_equality} in Theorem \ref{main_thm}.
\end{rem}

The proof of Theorem~\ref{main_thm_3} follows along the same lines as the proofs of Theorem~\ref{main_thm} and Theorem~\ref{main_thm_2}. First, since $\gamma_1$ and $\gamma_2$ satisfy (B2),  by Taylor's formula, we may represent  $\gamma_j(x,\rho, \mu)$ as the sum of a convergent power series, 
\begin{equation}
\label{gamma_k_form_smooth}
\gamma_j(x,\rho, \mu) = \sum_{k=0}^{\infty} \frac{1}{k!}\gamma_j^{(k)}(x,\rho, 0;\underbrace{\mu, \dots,\mu}_{k\text{ times}}),\quad x\in \Omega,\quad \rho\in\R, \quad \mu\in\text{neigh}(0,\R^n),  
\end{equation}
$j=1,2$. Here $\gamma_j^{(k)}(x, \rho, 0)$ is the $k$th differential of the real analytic function $\mu\mapsto \gamma_j(x,\rho, \mu)$ at $\mu=0$, which is a symmetric tensor of rank $k$, given by 
\[
\gamma_j^{(k)}(x,\rho,0;\mu, \dots,\mu)= \sum_{j_1,\ldots,j_k=1}^n(\p_{\mu_{j_1}} \dots \p_{\mu_{j_k}}\gamma_j)(x,\rho, 0)\mu_{j_1}\dots\mu_{j_k}, \quad x\in \Omega, \, \rho\in\R.
\]
First, performing the first order linearization of the Dirichlet problem \eqref{pf0_smooth_rho} and the Dirichlet--to--Neumann map \eqref{eq_int_DNmap_smooth_rho}, we obtain the inverse boundary problems for the linear conductivity equations with conductivities $0<\gamma_j(x,\rho,0)\in C^\infty(\overline{\Omega})$, and relying on \cite[Theorem 0.1]{SU87} and the observation discussed after \eqref{eq_int_100_1}, we conclude 
$\gamma_1(\cdot,\cdot,0)= \gamma_2(\cdot,\cdot,0)$ in $\Omega\times \R$.  Using the $m$th order linearization of \eqref{pf0_smooth_rho} and \eqref{eq_int_DNmap_smooth_rho}, $m\ge 2$, combined with the observation above,  we reduce the proof of the equality of tensors of rank $m-1$, 
\[
\gamma_1^{(m-1)}(\cdot,\cdot,0)=\gamma_2^{(m-1)}(\cdot,\cdot,0)\quad \text{in}\quad \Omega\times \R,
\]
to the following density result, similar to Proposition \ref{prop1}. 
\begin{prop}
\label{prop1_smooth_rho}
Let $\Omega \subset \R^n$, $n\geq 3$, be a  bounded open set with $C^\infty$ boundary. Let $0<\gamma_0 \in C^{\infty}(\overline{\Omega})$. Let $m\in \N$ and let $T$ be a continuous function on $\overline{\Omega}$ with values in the space of symmetric tensors of rank $m$. Suppose that
\begin{align*}
\sum_{(l_1,\dots, l_{m+1})\in \pi(m+1)}\sum_{j_1,\dots, j_{m}=1}^n \int_\Omega
  T^{j_1\dots j_m}(x)
   \p_{x_{j_1}}u_{l_1}\dots \p_{x_{j_m}}u_{l_{m}}\nabla u_{l_{m+1}}\cdot \nabla u_{m+2}dx=0,
\end{align*}
for all  $u_l\in C^\infty(\overline{\Omega})$ solving $\nabla\cdot (\gamma_0 \nabla u_l)=0$ in $\Omega$, $l=1,\dots, m+2$. Then, $T$ vanishes identically on $\Omega$.  
\end{prop}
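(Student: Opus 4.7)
The plan is to follow the proof strategy of Proposition~\ref{prop1} sketched in the text, which applies with only cosmetic modifications since here the tensor indices run from $1$ to $n$ rather than from $0$ to $n$, so that the zero-order component $(u_l,\nabla u_l)_0=u_l$ no longer appears in the integrand. First, for the base case $m=1$, I would polarize in $(u_{l_1},u_{l_2})$ and use the $L^2(\Omega)$-density of products $\gamma_0\,\nabla v_1\cdot\nabla v_2$ of gradients of solutions $v_j$, cf.\ \cite[Proposition~3.1]{KLassSil20}, to reduce the identity to $\sum_{j=1}^n T^j\,\partial_{x_j}u\equiv 0$ for every solution $u$. A single CGO family $e^{\zeta\cdot x}(1+o(1))$ with $\zeta\cdot\zeta=0$ and $|\zeta|\to\infty$ then extracts the Fourier transform of $T$ at arbitrary real frequencies and forces $T\equiv 0$.

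For $m\geq 2$, I would exploit the fact that there are $m+2\geq 4$ solutions at one's disposal. Fix any $p\in\Omega$ and complex vectors $\zeta,\tilde\zeta\in\C^n$ with $\zeta\cdot\zeta=\tilde\zeta\cdot\tilde\zeta=0$, $\text{Re}\,\zeta=\text{Re}\,\tilde\zeta$, $|\text{Re}\,\zeta|=1$, and $\text{Im}\,\zeta,\text{Im}\,\tilde\zeta$ linearly independent. The next step is to construct CGO solutions of the form $u_s^{\zeta}(x)=e^{s\,\zeta\cdot x}\bigl(a_\zeta(x)+r_\zeta(x,s)\bigr)$, with the amplitude $a_\zeta$ concentrated, in the sense of \cite{GU01}, near the 2-plane $\Pi_\zeta$ through $p$ spanned by $\text{Re}\,\zeta,\text{Im}\,\zeta$, and an analogous amplitude $a_{\tilde\zeta}$ concentrated near $\Pi_{\tilde\zeta}$; the remainder $r_\zeta$ will decay in $C^1(\overline{\Omega})$ as $s\to\infty$ by standard Carleman estimates for the conductivity equation. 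Substituting $m$ of the $u_l$ to be of $\zeta$-type and the remaining two (including $u_{m+2}$) to be of $\tilde\zeta$-type, with $s$-parameters chosen so that the exponential phases cancel, the $(m+2)$-fold product of amplitudes will localize to the ray $\Pi_\zeta\cap\Pi_{\tilde\zeta}=\{p+t\,\text{Re}\,\zeta:t\in\R\}$.

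Extracting the leading-order asymptotics of the integral identity as $s\to\infty$ via stationary phase transverse to this ray should reduce the identity to the vanishing, on all of $\R$, of the one-dimensional Fourier transform along the ray of a contraction of the form
\[
t\mapsto \sum_{j_1,\dots,j_m=1}^n T^{j_1\dots j_m}(p+t\,\text{Re}\,\zeta)\,\zeta_{j_1}\cdots\zeta_{j_{m-1}}\tilde\zeta_{j_m}.
\]
Taking $t=0$, varying $p$ over $\Omega$, and varying $\zeta,\tilde\zeta$ over an open set of admissible pairs, the symmetric tensor $T(p)$ will be determined by such evaluations, yielding $T(p)=0$ and hence $T\equiv 0$ by continuity. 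The main obstacle I anticipate is the combinatorial bookkeeping of the $(m+1)!$ permutations in $\pi(m+1)$: one must verify that only those permutations placing the two $\tilde\zeta$-type solutions in the correct slots (in particular one of them in the slot contracted with $\nabla u_{m+2}$) produce non-vanishing leading-order contributions, and that the resulting symmetric combinatorial coefficient is non-zero. A secondary, more technical, difficulty is arranging the phase cancellation so that no growing exponential survives, while simultaneously controlling the CGO remainders in a norm compatible with an $(m+2)$-fold product in the integrand.
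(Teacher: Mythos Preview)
Your plan for $m=1$ and $m=2$ is essentially the paper's. The gap is at $m\ge 3$: the pointwise identity you aim for,
\[
\sum_{j_1,\dots,j_m} T^{j_1\dots j_m}(p)\,\zeta_{j_1}\cdots\zeta_{j_{m-1}}\tilde\zeta_{j_m}=0 \qquad \text{for all admissible }(\zeta,\tilde\zeta),
\]
does \emph{not} determine a symmetric tensor $T$. For instance, when $m=4$ the symmetric tensor $T(v_1,v_2,v_3,v_4)=(v_1\cdot v_2)(v_3\cdot v_4)+(v_1\cdot v_3)(v_2\cdot v_4)+(v_1\cdot v_4)(v_2\cdot v_3)$ is nonzero yet satisfies $T(\zeta,\zeta,\zeta,\tilde\zeta)=3(\zeta\cdot\zeta)(\zeta\cdot\tilde\zeta)=0$ whenever $\zeta\cdot\zeta=0$. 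The paper makes exactly this point in the Remark after Lemma~\ref{lem_1}. So the real obstacle is not the permutation bookkeeping you anticipate, but the algebraic fact that a contraction with $m-1\ge 2$ copies of a single null vector $\zeta$ loses information about $T$.

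The paper's remedy is to \emph{not} fill all $m+2$ slots with CGO solutions. Instead one runs a second, downward induction (Lemma~\ref{lem_first_two_indices}): taking $k+1$ solutions of $\zeta$-type, two of $\tilde\zeta$-type, and leaving the remaining $m-k-1$ slots equal to a fixed \emph{arbitrary} solution $u$ of the linearized equation, one passes from $k=m-1$ down to $k=1$, at each step using the identity already proved at level $k+1$ to kill the spurious contribution. The endpoint is
\[
\sum_{j_1,\dots,j_m} T^{j_1\dots j_m}\,\zeta_{j_1}\tilde\zeta_{j_2}\,\partial_{x_{j_3}}u\cdots\partial_{x_{j_m}}u=0 \quad \text{in }\Omega,
\]
with only one $\zeta$ and one $\tilde\zeta$. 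From here (Lemma~\ref{lem_full}) the $m=2$ argument converts $(\zeta,\tilde\zeta)$ into arbitrary real vectors $(\xi^{(1)},\xi^{(2)})$; polarizing in $u$ and then substituting $m-2$ \emph{independent} CGO families $U_{\lambda\zeta^{(j)}}$, $j=3,\dots,m$, yields $T=0$. Your $m\ge 3$ step needs to be restructured along these lines.
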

The proof of Proposition \ref{prop1_smooth_rho} is contained in the proof of Proposition \ref{prop1}.  Therefore, the proof of  Proposition \ref{prop1_smooth_rho} and Theorem \ref{main_thm_3} will be omitted. 

Before closing the introduction, let us review some of the previous literature on the Calder\'{o}n  problem. There have been numerous studies on the Calder\'{o}n  problem for linear conductivities with the work \cite{SU87} being one of the first major contributions. Here the authors establish uniqueness in dimensions three and higher for smooth conductivities in a linear equation and thus give an affirmative answer to the problem stated by Calder\'on in \cite{Ca80}. They also introduce the main strategy for treating this problem that has appeared in almost all of the works on the subject that is based on the construction of CGO solutions. Since the original work of \cite{SU87}, the Calder\'{o}n problem has  received a lot of attention and different extensions of the work of \cite{SU87} have been considered thus far. This includes results in dimension two \cite{B,Na96}, results with less regular conductivities \cite{AP,HT}, results with measurements restricted to some portion of the boundary \cite{BU,IUY,KSU} and results in the setting of Riemannian manifolds \cite{DKSU09,GT,LU, KLS20}.

In contrast to the above mentioned results, the recovery of nonlinear conductivities has received less attention. One of the first important contribution devoted to this problem, can be found in \cite{Su96} where the author adapts a first order linearization idea near constant functions to derive a uniqueness result. We mention that the linearization idea was introduced by \cite{I1} for parabolic equations and considered in \cite{IN95,IS94} for elliptic equations. The results of  \cite{Su96} correspond to the recovery of conductivities of the form $\gamma(x,u)$, $x\in\Omega$, $u\in\mathbb R$, depending on the space variable and the solution of the non-linear equation. This work has been extended by \cite{SuUh97} who considered more general second order nonlinear terms still depending only on the space variable and the solution of the non-linear equation. 

In \cite{HS}, the authors considered the recovery of matrix valued quasilinear terms of the form $A(x,\nabla u)$ appearing in an elliptic equation of the form $\nabla\cdot A(x,\nabla u)=0$ in dimension two. In our context, the result of \cite{HS} can be seen as the recovery of conductivities of the form $\gamma(x,\nabla u)$, $x\in\Omega$, depending on the space variable and the gradient of the solution of the nonlinear equation. More recently, the works of \cite{EPS,MU20,Sh19} have been devoted to the recovery of nonlinear conductivities independent of the space variable. The first work dealing with the recovery of conductivities the form $\gamma(x, u,\omega\cdot\nabla u)$, $x\in\Omega$, $\omega\in \R^n$, $|\omega|=1$,  can be found in \cite{KKU20}. This work, which is based on the the higher order linearization approach initiated by \cite{KLU18}, considers not only the recovery of some class of conductivities having the dependency with respect to the space variable, the solution and its derivative, but it is also stated with data restricted to some arbitrary portion of the boundary. 

Without being exhaustive, we mention also the works of \cite{Cat1,Cat2,CNV19,CFa20, CFb20} devoted to the recovery of similar type of nonlinear terms for certain classes of elliptic nonlinear equations and the works of \cite{FO20,KU19,KU20_partial,KU20,LLLSa20, LLLSb20, LLST20,  LZ20} devoted to  inverse problems for  semilinear elliptic equations.

The paper is organized as follows. In Section~\ref{section_main_thm} we show that Theorem~\ref{main_thm} follows from the completeness property of Proposition~\ref{prop1} via the method of higher order linearizations.  In Section~\ref{sec_cgo} we give a brief review of the classical complex geometric optics solutions to the linear conductivity equation, achieving good remainder estimates in $C^1(\overline{\Omega})$, and  accommodating specific choices of the amplitudes. We also present the proof of the construction of such solutions in Appendix \ref{proof_CGO} for the convenience of the reader.  In Section~\ref{section_prop1} we use complex geometric optics solutions of Section~\ref{sec_cgo} to establish the completeness property of Proposition~\ref{prop1}.  Finally, Appendix \ref{App_well_posedness} contains the well-posedness of the Dirichlet problem for our conductivity equations in the case of boundary data close to a constant one and for real valued solutions.

\section{Proof of Theorem~\ref{main_thm}. Reduction to a completeness problem via linearization}
\label{section_main_thm}

The main aim of this section is to show that Theorem~\ref{main_thm} indeed follows from the method of higher order linearization together with the completeness property of Proposition~\ref{prop1}. 

Let $\varepsilon=(\varepsilon_1,\dots, \varepsilon_m)\in\C^m$, $m\ge 1$, and let $f_1,\dots, f_m\in C^\infty(\p \Omega)$.  In view of \eqref{gamma_k_form}, \eqref{gamma_k_form_2},  the Dirichlet problem \eqref{pf0} for conductivity $\gamma_j$ with the boundary data $f=\varepsilon_1f_1+\cdot+\varepsilon_m f_m$ can be written as follows, 
\begin{equation}
\label{eq_Dirichlet_100}
\begin{cases}
\nabla\cdot(\gamma_j(x,0)\nabla u_j)\\
\quad \quad \quad \quad \quad +\nabla\cdot \bigg(\sum_{k=1}^\infty \frac{1}{k!}\sum_{j_1,\ldots,j_k=0}^n(\p_{\lambda_{j_1}} \dots \p_{\lambda_{j_k}}\gamma_j)(x,0)\\
\quad \quad \quad \quad \quad \quad \quad \quad \quad \quad \quad (u_j,\nabla u_j)_{j_1}\dots (u_j,\nabla u_j)_{j_k} \nabla u_j
\bigg) =0 & \text{in }\Omega,\\
u_j=\varepsilon_1f_1+\cdot+\varepsilon_m f_m & \text{on }\p \Omega,
\end{cases}
\end{equation}
with $j=1,2$. Here and in what follows $(u_j,\nabla u_j)_{j_l}$ stands for the $j_l$th component of the vector $(u_j,\p_{x_1}u_j, \cdots, \p_{x_n}u_j)$, and in particular, $(u_j,\nabla u_j)_{0}=u_j$.  Arguing as in \cite[Appendix B]{KKU20}, we see that for all $|\varepsilon|$ sufficiently small, the problem \eqref{eq_Dirichlet_100} has a unique small solution $u(\cdot;\varepsilon)\in C^{2,\alpha}(\overline{\Omega})$, which is holomorphic in $\varepsilon$ in a neighborhood of $\varepsilon=0$, and moreover, $\Lambda_{\gamma_j}(\varepsilon_1f_1+\cdot+\varepsilon_m f_m)$, depends holomorphically on $\varepsilon$. 

We use induction argument on $m\ge 1$ to show that the equality
\begin{equation}
\label{eq_Dirichlet_100_1}
\Lambda_{\gamma_1}(\varepsilon_1f_1+\cdot+\varepsilon_m f_m)=\Lambda_{\gamma_2}(\varepsilon_1f_1+\cdot+\varepsilon_m f_m),
\end{equation}
for all  $|\varepsilon|$ sufficiently small and all $f_1,\dots, f_m\in C^\infty(\p \Omega)$, implies that 
\begin{equation}
\label{eq_Dirichlet_100_1_1}
\gamma_1^{(m-1)}(x,0)=\gamma_1^{(m-1)}(x,0), \quad x\in \Omega.
\end{equation}
To proceed, using \eqref{gamma_k_form}, \eqref{gamma_k_form_2}, we write \eqref{eq_Dirichlet_100_1} as follows
\begin{equation}
\label{eq_Dirichlet_100_2}
\begin{aligned}
&(\gamma_1(x,0)\p_\nu u_1)|_{\p \Omega} - (\gamma_2(x,0)\p_\nu u_2)|_{\p \Omega}\\
&+\bigg( \sum_{k=1}^\infty \frac{1}{k!}\sum_{j_1,\ldots,j_k=0}^n(\p_{\lambda_{j_1}} \dots \p_{\lambda_{j_k}}\gamma_1)(x,0)(u_1,\nabla u_1)_{j_1}\dots (u_1,\nabla u_1)_{j_k} \p_\nu u_1\bigg)\bigg|_{\p \Omega}\\
&-\bigg(\sum_{k=1}^\infty \frac{1}{k!}\sum_{j_1,\ldots,j_k=0}^n(\p_{\lambda_{j_1}} \dots \p_{\lambda_{j_k}}\gamma_2)(x,0)(u_2,\nabla u_2)_{j_1}\dots (u_2,\nabla u_2)_{j_k} \p_\nu u_2 \bigg)\bigg|_{\p \Omega}=0.
\end{aligned}
\end{equation}

First let $m=1$ and consider a first order linearization of \eqref{eq_Dirichlet_100} and \eqref{eq_Dirichlet_100_2}  to show that $\gamma_1(\cdot,0)=\gamma_2(\cdot,0)$ in  $ \Omega$.  To that end, differentiating \eqref{eq_Dirichlet_100}  and \eqref{eq_Dirichlet_100_2}  with respect to $\varepsilon_1$ and evaluating at $\varepsilon_1=0$, we deduce that the function $v_j= \p_{ \varepsilon_1} u_{j}|_{\varepsilon_1=0}$ solves the problem
 \begin{equation}\label{pf_lin}
\begin{aligned}
\begin{cases}
\nabla\cdot(\gamma_j(x,0)\nabla v_j)=0\,\quad &\text{in $\Omega$},
\\
v_j=f_1\,\quad &\text{on $\p \Omega$,}\\
\end{cases}
    \end{aligned}
\end{equation}
$j=1,2$, and that 
\[
(\gamma_1(x,0)\p_\nu v_1)|_{\p \Omega}=(\gamma_2(x,0)\p_\nu v_2)|_{\p \Omega}. 
\]
Thus, the Dirichlet--to--Neumann maps for the linear conductivity equations \eqref{pf_lin} coincide, i.e.
\[
 \Lambda^{\textrm{lin}}_{\gamma_1(\cdot,0)} = \Lambda^{\textrm{lin}}_{\gamma_2(\cdot,0)},
\]
where 
\[
 \Lambda^{\textrm{lin}}_{\gamma_j(\cdot,0)}:f_1 \mapsto (\gamma_j(x,0)\p_\nu v_j)|_{\p\Omega}. 
\]
Applying \cite[Theorem 0.1]{SU87} we conclude that
\begin{equation}
\label{cond_0_eq} 
\gamma_1(\cdot,0)= \gamma_2(\cdot,0)=:\gamma_0(\cdot) \quad \text{in} \quad  \Omega,
\end{equation}
which gives us the basis of the induction.

Next, let $m\ge 2$ and for the induction hypothesis, we assume that for $k=0,1,\dots, m-2$, 
\begin{equation}
\label{ind_Gamma_hypo}
\gamma_1^{(k)}(x,0)=\gamma_1^{(k)}(x,0), \quad x\in \Omega.
\end{equation}
We shall prove that \eqref{ind_Gamma_hypo} holds for $k=m-1$. To this end, we use the method of higher order linearization as \cite{KLU18,FO20,LLLSa20}. First, as above, applying the operator $\p_{\varepsilon_l}|_{\varepsilon=0}$, $l=1,\dots,m$, to 
\eqref{eq_Dirichlet_100}, we get
 \[
\begin{cases}
\nabla\cdot(\gamma_0\nabla v^{(l)}_j)=0\,\quad &\text{in $\Omega$},
\\
v_j^{(l)}=f_l\,\quad &\text{on $\p \Omega$,}\\
\end{cases}
\]
where $v_j^{(l)}= \p_{ \varepsilon_l} u_{j}|_{\varepsilon=0}$. It follows that $v^{(l)}:=v_1^{(l)}=v_2^{(l)}\in C^\infty(\overline{\Omega})$ by the uniqueness and elliptic regularity.

Let $\beta=(\beta_1,\ldots,\beta_m)\in \{0,1,\ldots\}^{m}$ be a multi-index with $|\beta|=\sum_{j=1}^m\beta_j$. By applying the differential operator $\p_\varepsilon^\beta$ to \eqref{eq_Dirichlet_100}, first when $|\beta|=2$, and repeatedly up to $|\beta|=m-1$, and  using the induction hypothesis \eqref{ind_Gamma_hypo}, 
we deduce that 
\begin{equation}
\label{eq_Dirichlet_100_3}
\p^\beta_\varepsilon u_1|_{\varepsilon=0} = \p^\beta_\varepsilon u_2|_{\varepsilon=0},
\end{equation}
for all multi-indices $\beta$ with $|\beta|\leq m-1$. Next, let us define for each $j=1,2,$ the function $w_j= \p^{m}_{\varepsilon_1\ldots\varepsilon_m} u_{j}|_{\varepsilon=0}$.
Applying the operator $\p^{m}_{\varepsilon_1\ldots\varepsilon_m}|_{\varepsilon=0}$ to \eqref{eq_Dirichlet_100}, we see that $w_j$ satisfies
\begin{equation}
\label{two_equations}
\begin{cases}
\nabla\cdot (\gamma_0\nabla w_j)+\nabla\cdot \bigg( \frac{1}{(m-1)!}\sum_{(l_1,\dots, l_m)\in \pi(m)}\sum_{j_1,\dots, j_{m-1}=0}^n \\
 \quad \quad  (\p_{\lambda_{j_1}} \dots \p_{\lambda_{j_{m-1}}}\gamma_j)(x,0) (v^{(l_1)},\nabla v^{(l_1)})_{j_1}\dots (v^{(l_{m-1})},\nabla v^{(l_{m-1})})_{j_{m-1}}\\
\quad \quad \quad \quad  \quad \quad  \quad \quad  \nabla v^{(l_m)} \bigg) =H_{m} & \text{in }\Omega,\\
w_j=0 & \text{on }\p \Omega. 
\end{cases}
\end{equation}
Here
\begin{align*}
H_{m}= -\nabla \cdot \bigg(\p^m_{\varepsilon_1 \ldots\varepsilon_m} \bigg(
\sum_{k=1}^{m-2} \frac{1}{k!}\sum_{j_1,\ldots,j_k=0}^n(\p_{\lambda_{j_1}} \dots \p_{\lambda_{j_k}}\gamma_j)(x,0)\\
\quad \quad \quad \quad \quad \quad \quad \quad \quad \quad \quad (u_j,\nabla u_j)_{j_1}\dots (u_j,\nabla u_j)_{j_k} \nabla u_j
\bigg)\bigg|_{\varepsilon=0} \bigg) 
\end{align*}
is independent of $j=1,2$ in view of \eqref{ind_Gamma_hypo} and \eqref{eq_Dirichlet_100_3}. 

Next, letting  
$ w=w_1-w_2$ and subtracting the two equations given by \eqref{two_equations} for $j=1,2$, we deduce that
\begin{equation}
\label{final_equation_w}
\begin{cases}
\nabla\cdot (\gamma_0\nabla w)+\nabla\cdot \bigg( \frac{1}{(m-1)!}\sum_{(l_1,\dots, l_m)\in \pi(m)}\sum_{j_1,\dots, j_{m-1}=0}^n \\
 \quad \quad \big( (\p_{\lambda_{j_1}} \dots \p_{\lambda_{j_{m-1}}}\gamma_1)(x,0)- (\p_{\lambda_{j_1}} \dots \p_{\lambda_{j_{m-1}}}\gamma_2)(x,0) \big)\\
 \quad \quad  \quad \quad  \quad \quad   (v^{(l_1)},\nabla v^{(l_1)})_{j_1}\dots (v^{(l_{m-1})},\nabla v^{(l_{m-1})})_{j_{m-1}} \nabla v^{(l_m)} \bigg) =0 & \text{in }\Omega,\\
w=0 & \text{on }\p \Omega. 
\end{cases}
\end{equation}
Applying the operator $\p^{m}_{\varepsilon_1\ldots\varepsilon_m}|_{\varepsilon=0}$ to \eqref{eq_Dirichlet_100_2}, and using \eqref{cond_0_eq},  \eqref{ind_Gamma_hypo}, and \eqref{eq_Dirichlet_100_3}, we get 
\begin{equation}
\label{final_equation_w_DN}
\begin{aligned}
\gamma_0(\p_\nu w_1-\p_\nu w_2)|_{\p \Omega}+&\bigg(\frac{1}{(m-1)!}\sum_{(l_1,\dots, l_m)\in \pi(m)}\sum_{j_1,\dots, j_{m-1}=0}^n \\
&  \big( (\p_{\lambda_{j_1}} \dots \p_{\lambda_{j_{m-1}}}\gamma_1)(x,0)- (\p_{\lambda_{j_1}} \dots \p_{\lambda_{j_{m-1}}}\gamma_2)(x,0) \big)\\
  &(v^{(l_1)},\nabla v^{(l_1)})_{j_1}\dots (v^{(l_{m-1})},\nabla v^{(l_{m-1})})_{j_{m-1}} \p_\nu v^{(l_m)}\bigg)\bigg|_{\p\Omega}=0. 
\end{aligned}
\end{equation}
Let $v^{(m+1)}\in C^\infty(\overline{\Omega})$ be such that $\nabla\cdot (\gamma_0 \nabla v^{(m+1)})=0$ in $\Omega$. Multiplying  \eqref{final_equation_w} by $v^{(m+1)}$,  integrating by parts, and using \eqref{final_equation_w_DN}, we get 
\begin{equation}
\label{final_equation_w_integral}
\begin{aligned}
\sum_{(l_1,\dots, l_m)\in \pi(m)}\sum_{j_1,\dots, j_{m-1}=0}^n \int_\Omega
  \big( (\p_{\lambda_{j_1}} \dots \p_{\lambda_{j_{m-1}}}\gamma_1)(x,0)- (\p_{\lambda_{j_1}} \dots \p_{\lambda_{j_{m-1}}}\gamma_2)(x,0) \big) \\
   (v^{(l_1)},\nabla v^{(l_1)})_{j_1}\dots (v^{(l_{m-1})},\nabla v^{(l_{m-1})})_{j_{m-1}} \nabla v^{(l_m)}\cdot \nabla v^{(m+1)}dx=0,
\end{aligned}
\end{equation}
which is valid for all  $v^{(l)}\in C^\infty(\overline{\Omega})$ solving $\nabla\cdot (\gamma_0 \nabla v^{(l)})=0$ in $\Omega$, $l=1,\dots, m+1$. 
By applying Proposition~\ref{prop1} with 
\[
T^{j_1\dots j_{m-1}}(x):=(\p_{\lambda_{j_1}} \dots \p_{\lambda_{j_{m-1}}}\gamma_1)(x,0)- (\p_{\lambda_{j_1}} \dots \p_{\lambda_{j_{m-1}}}\gamma_2)(x,0),
\]
 we conclude that \eqref{eq_Dirichlet_100_1_1} holds. This completes the proof of Theorem~\ref{main_thm}.

\section{Complex geometric optics solutions to the linear conductivity equation}
\label{sec_cgo}

Let $\Omega\subset \R^n$, $n\ge 3$, be a bounded open set with $C^\infty$ boundary, and let  $0<\gamma_0\in C^\infty(\overline{\Omega})$. Consider the linear conductivity equation
\begin{equation}
\label{linear_eq}
\nabla\cdot (\gamma_0\nabla u)=0\quad \text{in}\quad \Omega. 
\end{equation}
The underlying idea in proving Proposition~\ref{prop1} is to construct a rich enough family of special solutions to \eqref{linear_eq} so that the integral identity \eqref{int_iden_density}, tested against these solutions,  forces the tensor $T$ in the statement of the proposition to be zero. For the special solutions, we use the classical complex geometric optics (CGO) solutions, with some special choices of amplitudes, enjoying some concentration properties near two dimensional planes, in the spirit of \cite{GU01}. Let us remark that CGO solutions were introduced in \cite{Ca80} for the case $\gamma_0=1$ and developed further for arbitrary positive $\gamma_0$ in \cite{SU87},  see \cite{DKSU09} for generalization of CGO solutions in the context of certain classes of Riemannian manifolds. 

As we have to work with products of more than four solutions and their first order derivatives, we need to use CGO solutions with good estimates for the remainder terms in $C^{1}(\overline{\Omega})$.  The construction of such solutions  to the conductivity equation \eqref{linear_eq} is known, and is stated in the following lemma. We give a very simple proof of it in  Appendix \ref{proof_CGO}  for the convenience of the reader and to be able to accommodate our choice of the amplitudes, see also \cite{BU10}, \cite[Proposition 2]{FO20},  \cite{LLLSa20}, \cite{KU20}. 

\begin{lem}
\label{lem_cgo}
Let  $\Omega \subset \R^n$, $n\ge 3$,  be a bounded open set with $C^\infty$ boundary and suppose that $0<\gamma_0 \in C^{\infty}(\overline{\Omega})$.  Let $\tilde \Omega\subset \R^n$ be a bounded open set with $C^\infty$ boundary such that $\Omega\subset \subset \tilde \Omega$ and let us extend $ \gamma_0$ to $\tilde \Omega\setminus\Omega$ so that the extension still denoted by $\gamma_0\in C^\infty(\overline{\tilde \Omega})$.  Let  $0\ne \zeta\in \C^n$ be such that $\zeta\cdot\zeta=0$, and $a\in C^\infty(\overline{\tilde \Omega})$ satisfy the transport equation 
\begin{equation}
\label{eq_500_9}
\zeta\cdot \nabla a=0\quad \text{in} \quad \tilde \Omega. 
\end{equation}
Then for $\lambda>0$ large enough, the conductivity equation \eqref{linear_eq} 
has solutions $U_{\lambda\zeta}\in C^\infty(\overline{\Omega})$ of the form
\begin{equation}
\label{eq_500_10}
U_{\lambda\zeta}(x)=e^{\lambda\zeta\cdot x}\gamma_0^{-1/2}(a(x)+r_{\lambda\zeta}(x))
\end{equation}
where 
\begin{equation}
\label{eq_500_11}
\|r_{\lambda\zeta}\|_{C^1(\overline{\Omega})}=\mathcal{O}(\lambda^{-1}), \quad \lambda\to \infty.
\end{equation}
\end{lem}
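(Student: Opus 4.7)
The plan is to reduce the conductivity equation to a Schr\"odinger equation via the standard Liouville substitution, and then construct the remainder $r$ using the classical Faddeev--Sylvester--Uhlmann inverse of the conjugated Laplacian.

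First, I would set $v=\gamma_0^{1/2}U$ and use the identity
\begin{equation*}
\gamma_0^{1/2}\,\nabla\cdot\bigl(\gamma_0\nabla(\gamma_0^{-1/2} v)\bigr)=(\Delta-q)v,\qquad q:=\gamma_0^{-1/2}\Delta(\gamma_0^{1/2})\in C^\infty(\overline{\tilde\Omega}),
\end{equation*}
to reduce \eqref{linear_eq} on $\Omega$ to the Schr\"odinger equation $(-\Delta+q)v=0$ on $\Omega$. Inserting the ansatz $v=e^{\lambda\zeta\cdot x}(a+r)$ and using $\zeta\cdot\zeta=0$ together with the transport equation \eqref{eq_500_9}, a direct computation shows that it suffices to produce $r$ satisfying the conjugated equation
\begin{equation*}
(-\Delta-2\lambda\zeta\cdot\nabla+q)\,r=\Delta a-qa \quad\text{on }\Omega.
\end{equation*}

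Next, I would extend $q$ to a compactly supported smooth function on $\R^n$ (still denoted $q$) and, fixing a cutoff $\chi\in C^\infty_c(\tilde\Omega)$ with $\chi\equiv 1$ on a neighbourhood of $\overline{\Omega}$, extend the right-hand side to the compactly supported source $\chi(\Delta a-qa)\in C^\infty_c(\R^n)$. I would then solve the resulting equation on all of $\R^n$ by the Faddeev--Sylvester--Uhlmann inverse $G_\zeta^\lambda$ satisfying $(-\Delta-2\lambda\zeta\cdot\nabla)G_\zeta^\lambda=I$, together with the weighted estimate
\begin{equation*}
\|G_\zeta^\lambda f\|_{H^s_\delta(\R^n)}\le \frac{C_s}{\lambda}\,\|f\|_{H^s_{\delta+1}(\R^n)},\qquad -1<\delta<0,\quad s\ge 0,
\end{equation*}
valid for $\lambda$ large (the $H^s$ bound follows from the $L^2$ bound because the principal part is constant-coefficient, so spatial derivatives commute with $G_\zeta^\lambda$). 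Rewriting the remainder equation as $(I+G_\zeta^\lambda q)r=G_\zeta^\lambda\bigl(\chi(\Delta a-qa)\bigr)$ and running a Neumann series in $H^s_\delta(\R^n)$ (which converges for $\lambda$ large, since $q$ is compactly supported and multiplication by $q$ is bounded on $H^s_{\delta+1}$), I obtain a solution $r$ with $\|r\|_{H^s(\Omega)}=\mathcal{O}(\lambda^{-1})$.

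Finally, choosing $s>n/2+1$ and invoking Sobolev embedding $H^s(\Omega)\hookrightarrow C^1(\overline{\Omega})$ upgrades the bound to \eqref{eq_500_11}; the $C^\infty$-smoothness of $U_{\lambda\zeta}$ on $\overline{\Omega}$ then follows from interior elliptic regularity applied to \eqref{linear_eq} on $\tilde\Omega$, since $U_{\lambda\zeta}$ already solves the equation on $\Omega$. The main technical point is precisely this last step: the $L^2$-type remainder estimate from the standard CGO construction is not sufficient, because Proposition~\ref{prop1} requires testing against products of up to $m+2$ such solutions together with their gradients, so any $\mathcal{O}(1)$ gradient error in $r$ would destroy the required concentration. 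Propagating the $\lambda^{-1}$ gain through higher-order Sobolev norms before applying embedding is what makes the $C^1(\overline{\Omega})$ control possible.
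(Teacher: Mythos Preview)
Your argument is correct and follows the classical Sylvester--Uhlmann route: invert the free conjugated Laplacian via the Faddeev-type operator $G_\zeta^\lambda$ acting on weighted Sobolev spaces $H^s_\delta(\R^n)$, propagate the $\lambda^{-1}$ gain to higher $s$ by commuting derivatives through the Fourier multiplier, and close with a Neumann series and Sobolev embedding. The paper instead uses H\"ahner's periodic approach: it places $\Omega$ inside a cube $Q=[-\pi,\pi]^n$, expands in the shifted basis $v_l(x)=e^{i(l+\frac12 e_1)\cdot x}$, and inverts $-\Delta-2\lambda\zeta\cdot\nabla$ symbol-wise, obtaining $\|r\|_{H^m(Q)}\le C\lambda^{-1}\|f\|_{H^m(Q)}$ directly in unweighted Sobolev spaces. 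The two methods are interchangeable here; H\"ahner's avoids the bookkeeping of weighted spaces and the need to check that multiplication by the compactly supported $q$ shifts weights correctly, while your approach is the more familiar one from the original CGO literature and extends more readily to settings (e.g.\ unbounded domains) where the periodic trick is unavailable. One minor wording point: your appeal to interior elliptic regularity should be on the neighborhood $\{\chi=1\}\supset\overline\Omega$ where the extended equation coincides with the genuine Schr\"odinger equation, not on all of $\tilde\Omega$; this is exactly what the paper arranges by performing the construction on $\tilde\Omega$ from the outset.
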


Let $\zeta\in \C^n$ be such that $\zeta\cdot\zeta=0$ and $|\text{\Re}\, \zeta|=|\text{Im}\,\zeta|=1$. 
When proving Proposition~\ref{prop1}, we shall work with CGO solutions \eqref{eq_500_10} with the amplitudes $a$ which depend on a given point $p\in \Omega$ and which concentrate near the two dimensional plane, passing through $p$ and spanned by $\text{\Re}\, \zeta$ and $\text{Im}\, \zeta$,  in the spirit of  \cite{GU01}. To construct such amplitudes, let us fix $\sigma \in \R$, $\delta \in (0,1)$ and let $\{\omega_j\}_{j=1}^{n-2}$ be an orthonormal set satisfying
\[
 \omega_j \cdot \textrm{Re}\,\zeta= \omega_j \cdot \textrm{Im}\,\zeta=0,\quad \text{for $j=1,2,\ldots,n-2$.}
\]
Let $\chi\in C^{\infty}_0(\R;[0,1])$ be chosen so that $\chi(t)=1$ for $|t|\leq \frac{1}{2}$ and $\chi(t)=0$ for $|t|\geq 1$. We define $a$ via
\begin{equation}
\label{h_form}
a(x)=e^{i\sigma \zeta\cdot (x-p)}\,\prod_{j=1}^{n-2}\chi\bigg(\frac{\omega_j\cdot (x-p)}{\delta}\bigg).
\end{equation}
We have that $a\in C^\infty(\R^n)$ and $\zeta\cdot\nabla a=0$ in $\R^n$. 

Let us finally remark that  $\supp(a)$ is contained in a $\delta$-neighborhood of the two plane, passing through the point $p$, spanned by the vectors $\text{Re}\,\zeta$ and $\text{Im}\,\zeta$. Indeed, letting $\Pi_\zeta=\text{span}\{\text{Re}\,\zeta, \text{Im}\,\zeta\}$ be the plane, passing through the origin, spanned by $\text{Re}\,\zeta$ and $\text{Im}\,\zeta$, and letting  $x\in \supp(a)$, we get  
\begin{equation}
\label{eq_500_12}
\dist(x-p, \Pi_\zeta)=\sqrt{\sum_{j=1}^{n-2}(\omega_j\cdot(x-p))^2}\le \sqrt{n}\delta,
\end{equation}
showing the claim.

\section{Proof of Proposition~\ref{prop1}}
\label{section_prop1}

\subsection{Proof of Proposition~\ref{prop1} in the case $m=1$}  In this case the integral identity \eqref{int_iden_density} has the form
\begin{equation}
\label{int_iden_density_m=1}
\begin{aligned}
&0=\sum_{(l_1,l_2)\in \pi(2)}\sum_{j=0}^n \int_\Omega T^j(x) (u_{l_1},\nabla u_{l_1})_{j}\nabla u_{l_2}\cdot \nabla u_3 dx\\
&=\sum_{(l_1,l_2)\in \pi(2)}\int_\Omega T^0(x) u_{l_1}\nabla u_{l_2}\cdot\nabla u_3 dx
+\sum_{(l_1,l_2)\in \pi(2)}\sum_{j=1}^n \int_\Omega T^j(x) \p_{x_j}u_{l_1}\nabla u_{l_2}\cdot \nabla u_3 dx,
\end{aligned}
\end{equation}
which holds for all $u_l\in C^\infty(\overline{\Omega})$, $l=1,2,3$, solving 
\begin{equation}
\label{lin_rand}
\nabla \cdot(\gamma_0 \nabla u)=0\quad \text{in} \quad \Omega.
\end{equation} 
First letting $u_2=1$ into \eqref{int_iden_density_m=1}, we get 
\begin{equation}
\label{int_iden_density_m=1_T_0}
\int_\Omega T^0(x) \nabla u_{1}\cdot\nabla u_3 dx=0,
\end{equation}
for all $u_1,u_3\in C^\infty(\overline{\Omega})$ solving \eqref{lin_rand}. Using the fact that 
\begin{equation}
\label{eq_density_gradients_900}
\text{span}\{\gamma_0 \nabla v_1\cdot \nabla v_2: v_j\in C^\infty(\overline{\Omega}), \nabla  \cdot (\gamma_0\nabla v_j)=0, j=1,2\}
\end{equation}
is dense in $L^2(\Omega)$, see \cite[Proposition 3.1]{KLassSil20}, we obtain from \eqref{int_iden_density_m=1_T_0} that 
\begin{equation}
\label{int_iden_density_m=1_T_0_1}
T^0=0\quad \text{in} \quad \Omega. 
\end{equation}
Hence, in view of \eqref{int_iden_density_m=1_T_0_1}, the identity  \eqref{int_iden_density_m=1} becomes
\begin{equation}
\label{int_iden_density_m=1_gen}
\sum_{(l_1,l_2)\in \pi(2)}\sum_{j=1}^n \int_\Omega T^j(x) \p_{x_j}u_{l_1}\nabla u_{l_2}\cdot \nabla u_3 dx=0,
\end{equation}
for all $u_l\in C^\infty(\overline{\Omega})$, $l=1,2,3$, solving \eqref{lin_rand}.
Setting $u_1=u_2$ in \eqref{int_iden_density_m=1_gen}, we deduce that
\begin{equation}
\label{polarized_rand_0}
 \int_{\Omega} \sum_{j=1}^nT^{j}\p_{x_j} u_1\nabla u_1\cdot \nabla u_3\,dx=0. 
\end{equation}
Let $v, w\in C^\infty(\overline{\Omega})$ be solutions to \eqref{lin_rand}, and let us choose $u_1=v+w$ and $u_3=v$. It follows from \eqref{polarized_rand_0} that
\begin{equation}
\label{polarized_rand_0_200}
\begin{aligned}
0=\int_\Omega \sum_{j=1}^nT^{j}\p_{x_j} v\nabla v\cdot \nabla v\,dx+ \int_\Omega  \sum_{j=1}^nT^{j}\p_{x_j} v \nabla w\cdot \nabla v\,dx\\
+\int_\Omega  \sum_{j=1}^nT^{j}\p_{x_j} w\nabla v\cdot \nabla v\,dx+\int_\Omega\sum_{j=1}^nT^{j}\p_{x_j} w\nabla w\cdot \nabla v\,dx.
\end{aligned}
\end{equation}
The first, second, and fourth terms in \eqref{polarized_rand_0_200} must vanish by \eqref{polarized_rand_0}, and consequently, we get 
\begin{equation}
\label{polarized_rand_0_200_1}
\int_\Omega \sum_{j=1}^nT^{j}\p_{x_j} w \nabla v\cdot \nabla v\,dx=0,
\end{equation}
for all $v, w\in C^\infty(\overline{\Omega})$ solving \eqref{lin_rand}. Finally, by polarization of \eqref{polarized_rand_0_200_1}, we obtain that
\begin{equation}
\label{polarized_eq} 
\int_\Omega  \sum_{j=1}^nT^{j}\p_{x_j} w\nabla v_1\cdot \nabla v_2\,dx=0,
\end{equation}
for all $v_1,v_2, w\in C^\infty(\overline{\Omega})$ solving \eqref{lin_rand}. Using the fact that \eqref{eq_density_gradients_900} is dense in $L^2(\Omega)$, see \cite[Proposition 3.1]{KLassSil20}, we conclude from \eqref{polarized_eq} that 
\begin{equation}
\label{polarized_rand_0_200_2}
 \sum_{j=1}^nT^{j}\p_{x_j} w=0\quad \text{in}\quad \Omega,
\end{equation}
for all $w\in C^\infty(\overline{\Omega})$ solving \eqref{lin_rand}.

We want to use CGO solutions to show that \eqref{polarized_rand_0_200_2} implies that $(T^1, \dots, T^n)$ is identically zero. To that end, letting $\zeta\in \C^n$ be  such that $\zeta\cdot\zeta =0$ and $|\text{Re}\, \zeta|=|\text{Im}\, \zeta|=1$, by Lemma \ref{lem_cgo} for all $\lambda>0$ large enough, the conductivity equation \eqref{lin_rand} has solutions of the form
\begin{equation}
\label{polarized_rand_0_200_3}
U_{\lambda\zeta}=e^{\lambda\zeta\cdot x}\gamma_0^{-1/2}(1+r_{\lambda\zeta})\in C^\infty(\overline{\Omega}),
\end{equation}
with $\|r_{\lambda\zeta}\|_{C^1(\overline{\Omega})}=\mathcal{O}(\lambda^{-1})$, as $\lambda\to \infty$. Substituting $U_{\lambda\zeta}$ given by \eqref{polarized_rand_0_200_3} into \eqref{polarized_rand_0_200_2}, and multiplying by $\lambda^{-1}$, we see that 
\begin{equation}
\label{m1_final}
\sum_{j=1}^n T^j \, \zeta_j=0\quad \text{in}\quad \Omega.
\end{equation}
Let $\xi \in \R^n$ be an arbitrary unit vector  and choose $\eta \in \R^n$ such that $|\eta|=1$ and $\xi\cdot \eta=0$. Thus,  the vector 
\[
 \zeta= \xi + i\eta 
\]
satisfies $\zeta\cdot\zeta=\overline{\zeta}\cdot\overline{\zeta}=0$. Hence, in view of \eqref{m1_final}, we conclude that
\begin{equation}
\label{polarized_rand_0_200_4}
 2\sum_{j=1}^n T^j \xi_j = \sum_{j=1}^n T^j (\zeta_j + \overline{\zeta}_j)=0 \quad \text{in}\quad \Omega.
\end{equation}
It follows from \eqref{int_iden_density_m=1_T_0_1},  \eqref{polarized_rand_0_200_4} that $T=(T^0, T^1,\dots, T^n)=0$ in $\Omega$. This completes the proof of  Proposition~\ref{prop1} in the case $m= 1$.

\subsection{Proof of Proposition~\ref{prop1} in the case $m= 2$}

When $m=2$, the integral identity \eqref{int_iden_density} has the form
\begin{equation}
\label{int_iden_density_m=2}
\begin{aligned}
0=&\sum_{(l_1,l_2,l_3)\in \pi(3)}\sum_{j,k=0}^n \int_\Omega T^{jk}(x) (u_{l_1},\nabla u_{l_1})_{j}(u_{l_2},\nabla u_{l_2})_{k}\nabla u_{l_3}\cdot \nabla u_4 dx\\
&=\sum_{(l_1,l_2,l_3)\in \pi(3)} \int_{\Omega}T^{00}(x) u_{l_1}u_{l_2}\nabla u_{l_3}\cdot\nabla u_4dx\\
&+ 
2\sum_{(l_1,l_2,l_3)\in \pi(3)} \sum_{j=1}^n\int_{\Omega}T^{0j}(x) u_{l_1} \p_{x_j}u_{l_2}\nabla u_{l_3}\cdot\nabla u_4dx\\
& +\sum_{(l_1,l_2,l_3)\in \pi(3)}\sum_{j,k=1}^n \int_\Omega T^{jk}(x) \p_{x_j}u_{l_1}\p_{x_k}u_{l_2}\nabla u_{l_3}\cdot \nabla u_4 dx,
\end{aligned}
\end{equation}
which holds for all $u_l\in C^\infty(\overline{\Omega})$, $l=1,\dots,4$ solving \eqref{lin_rand}.  

First letting $u_3=u_2=1$ in \eqref{int_iden_density_m=2}, we get 
\begin{equation}
\label{int_iden_density_m=2_0}
\int_{\Omega} T^{00}(x)\nabla u_1\cdot\nabla u_4 dx=0,
\end{equation}
 for all $u_1, u_4\in C^\infty(\overline{\Omega})$ solving \eqref{lin_rand}.  Note that the identity \eqref{int_iden_density_m=2_0} is the same as \eqref{int_iden_density_m=1_T_0}, and arguing as above, we conclude that 
\begin{equation}
\label{int_iden_density_m=2_0}
T^{00}=0\quad \text{in}\quad \Omega.
\end{equation}

Next using \eqref{int_iden_density_m=2_0} and letting $u_3=1$ in \eqref{int_iden_density_m=2}, we obtain that 
\begin{equation}
\label{int_iden_density_m=2_T_0_j}
\sum_{(l_1,l_2)\in \pi(2)}\int_{\Omega} \sum_{j=1}^n T^{0j}(x)  \p_{x_j}u_{l_1}\nabla u_{l_2}\cdot\nabla u_4dx=0,
\end{equation}
for all $u_1,u_2,u_4\in C^\infty(\overline{\Omega})$ solving \eqref{lin_rand}.  Note that the identity \eqref{int_iden_density_m=2_T_0_j} is the same as \eqref{int_iden_density_m=1_gen}, and therefore, arguing as above, we get 
\begin{equation}
\label{int_iden_density_m=2_0_T_0_j}
(T^{01},\dots, T^{0n})=0\quad \text{in}\quad \Omega.
\end{equation} 

In view of \eqref{int_iden_density_m=2_0} and \eqref{int_iden_density_m=2_0_T_0_j}, the idenity \eqref{int_iden_density_m=2} becomes
\begin{equation}
\label{int_iden_density_m=2_main_part}
\sum_{(l_1,l_2,l_3)\in \pi(3)}\sum_{j,k=1}^n \int_\Omega T^{jk}(x) \p_{x_j}u_{l_1}\p_{x_k}u_{l_2}\nabla u_{l_3}\cdot \nabla u_4 dx=0,
\end{equation}
 for all $u_l\in C^\infty(\overline{\Omega})$, $l=1,\dots,4$ solving \eqref{lin_rand}.   Here we do not have a straightforward analogue of the identity \eqref{polarized_eq} that we obtained via polarization in the previous section. On the other hand,  identity \eqref{int_iden_density_m=2_main_part} contains four solutions to the linear conductivity equation and we can use a different approach by using CGO solutions corresponding to different vectors to obtain pointwise information about the tensor $T$. We start with a definition. 

\begin{defn}[Admissible pairs of vectors]
\label{zeta_pair}
We define $\mathcal A$ as the set of all pairs of vectors $(\zeta,\widetilde{\zeta}) \in \C^n\times \C^n$ that satisfy the following properties:
\begin{itemize}
\item[(i)]{$\zeta\cdot \zeta=\widetilde{\zeta}\cdot\widetilde{\zeta}=0$,}
\item[(ii)]{$\textrm{Re}\,\zeta=\textrm{Re}\,\widetilde \zeta$,}
\item[(iii)]{$\textrm{Im}\,\widetilde{\zeta} \notin \{t\,\textrm{Im}\,\zeta\,:\,t \in \R\}.$}
\end{itemize}
\end{defn} 

To introduce suitable CGO solutions, we let $p \in \Omega$,  $(\zeta,\widetilde \zeta) \in \mathcal A$, $|\text{Re}\,\zeta|=|\text{Re}\,\tilde \zeta|=1$,  $\sigma \in \R$, $\delta \in (0,1)$, and define orthonormal sets $\{\omega_j\}_{j=1}^{n-2}$ and $\{\widetilde{\omega}_j\}_{j=1}^{n-2}$ that satisfy
\begin{equation}
\label{eq_600_-10}
 \omega_j \cdot \textrm{Re}\,\zeta= \omega_j \cdot \textrm{Im}\,\zeta=0,\quad \text{for $j=1,2,\ldots,n-2$.}
\end{equation}
and
\begin{equation}
\label{eq_600_-10_1}
\widetilde{\omega}_j \cdot \textrm{Re}\,\widetilde\zeta= \widetilde{\omega}_j \cdot \textrm{Im}\,\widetilde\zeta=0,\quad\text{for $j=1,2,\ldots,n-2$.}
\end{equation}
Let $\chi\in C^{\infty}_0(\R;[0,1])$ be such that $\chi(t)=1$ for $|t|\leq \frac{1}{2}$ and $\chi(t)=0$ for $|t|\geq 1$. We set 
\begin{equation}
\label{h_form}
a(x)=e^{i\sigma \zeta\cdot (x-p)}\,\prod_{j=1}^{n-2}\chi\bigg(\frac{\omega_j\cdot (x-p)}{\delta}\bigg), 
\end{equation}
and
\begin{equation}
\label{h_tilde_form}
\widetilde a(x) =e^{i\sigma \widetilde\zeta\cdot (x-p)}\prod_{j=1}^{n-2}\chi\bigg(\frac{\widetilde{\omega}_j\cdot (x-p)}{\delta}\bigg).
\end{equation}
We have $a, \widetilde a\in C^\infty(\R^n)$, and 
$$ \zeta\cdot \nabla a = \widetilde{\zeta} \cdot \nabla \widetilde{a}=0\quad \text{in}\quad  \R^n.$$ 

By Lemma \ref{lem_cgo}, for all $\lambda>0$ large enough, the conductivity equation \eqref{lin_rand} has solutions $U_{\lambda\zeta}, U_{-\lambda\zeta},  U_{\lambda\tilde \zeta}, U_{-\lambda\tilde \zeta} \in C^\infty(\overline{\Omega})$ of the form
\begin{equation}
\label{eq_600_1}
\begin{aligned}
&U_{\pm \lambda\zeta}(x)= e^{\pm \lambda \zeta\cdot x}\gamma_0(x)^{-\frac{1}{2}}\left( a(x)+r_{\pm \lambda\zeta}(x)\right),\\
&U_{\pm \lambda\tilde \zeta}(x)= e^{\pm \lambda \tilde \zeta\cdot x}\gamma_0(x)^{-\frac{1}{2}}\left( \tilde a(x)+r_{\pm \lambda\tilde \zeta}(x)\right),
\end{aligned}
\end{equation}
where 
\begin{equation}
\label{eq_600_2}
\|r_{\pm\lambda\zeta}\|_{C^1(\overline{\Omega})}=\mathcal{O}(\lambda^{-1}), \quad \|r_{\pm\lambda\tilde \zeta}\|_{C^1(\overline{\Omega})}=\mathcal{O}(\lambda^{-1}),
\end{equation}
as $\lambda\to \infty$. 
Note that since $(\zeta,\tilde \zeta)\in \mathcal{A}$ and $|\text{Re}\,\zeta|=|\text{Re}\,\tilde \zeta|=1$, we have 
\begin{equation}
\label{eq_600_3}
 \zeta\cdot\tilde \zeta=1-\text{Im}\zeta\cdot\text{Im}\tilde\zeta>0.
\end{equation}
Let
\begin{equation}
\label{eq_600_4}
u_1=U_{\lambda\zeta}, \quad u_2= U_{-\lambda\zeta},\quad u_{3}=\widetilde U_{\lambda\tilde \zeta},\quad \quad u_{4}=\widetilde U_{-\lambda\tilde \zeta}.
\end{equation}
Substituting \eqref{eq_600_4}, \eqref{eq_600_1},  into the integral identity \eqref{int_iden_density_m=2_main_part}, multiplying by $\lambda^{-4}$, $\lambda\to \infty$,  and using \eqref{eq_600_2}, \eqref{eq_600_3},  we get  
\[
(\zeta\cdot\tilde \zeta)\int_\Omega \sum_{j,k=1}^n T^{jk}\zeta_j\tilde \zeta_k\gamma_0^{-2}(a\widetilde a)^2dx=0,
\]
and therefore,
\begin{equation}
\label{eq_600_5}
\int_\Omega \sum_{j,k=1}^n T^{jk}\zeta_j\tilde \zeta_k\gamma_0^{-2}Fdx=0.
\end{equation}
Here 
\[
F(x)=(a(x)\widetilde a(x))^2=e^{i 2\sigma(\zeta+\tilde \zeta)\cdot (x-p)}\prod_{j=1}^{n-2}\chi\bigg(\frac{\omega_j\cdot (x-p)}{\delta}\bigg)^2 \chi\bigg(\frac{\widetilde{\omega}_j\cdot (x-p)}{\delta}\bigg)^2, 
\]
$\delta\in (0,1)$ and $\sigma\in \R$. 

We claim that in view of the fact that $(\zeta,\tilde \zeta)\in \mathcal{A}$, we have $\supp(F)$ is contained in  a $\delta$--neighborhood of the ray
\begin{equation}
\label{eq_600_6}
\{x\in \R^n: x=p+t\text{Re}\, \zeta, \ t\in \R\}.
\end{equation}
Indeed, letting $x$ be in $\supp(F)$ and letting 
\[
\Pi_\zeta=\text{span}\{\text{Re}\,\zeta, \text{Im}\,\zeta\}, \quad \Pi_{\tilde \zeta}=\text{span}\{\text{Re}\,\zeta, \text{Im}\,\tilde \zeta\},
\]
 be the two dimensional planes, passing through the origin, spanned by $\text{Re}\,\zeta$,  $\text{Im}\,\zeta$, and $\text{Re}\,\zeta$,  $\text{Im}\,\tilde \zeta$, respectively,  we have in view of  \eqref{eq_500_12}, 
\begin{equation}
\label{eq_600_7}
\dist(x-p, \Pi_\zeta)\le \sqrt{n}\delta, \quad \dist(x-p, \Pi_{\tilde\zeta})\le \sqrt{n}\delta.
\end{equation}
Using  \eqref{eq_600_7} and  
\begin{equation}
\label{eq_600_7_9000_1}
\dist(x-p, \Pi_\zeta)=|x-p-((x-p)\cdot \text{Re}\,\zeta)\text{Re}\,\zeta- ((x-p)\cdot \text{Im}\,\zeta)\text{Im}\,\zeta|,
\end{equation}
and the corresponding expression for $\dist(x-p, \Pi_{\tilde \zeta})$, 
we get 
\begin{equation}
\label{eq_600_7_9000_2}
|((x-p)\cdot \text{Im}\,\zeta)\text{Im}\,\zeta-((x-p)\cdot \text{Im}\,\tilde \zeta)\text{Im}\,\tilde \zeta|\le 2\sqrt{n}\delta.
\end{equation}
Since the vectors $\text{Im}\,\zeta$ and $\text{Im}\,\tilde\zeta$ are linearly independent, we see from \eqref{eq_600_7_9000_2} that 
\[
|((x-p)\cdot \text{Im}\,\zeta)\text{Im}\,\zeta |\le C\delta, \quad |((x-p)\cdot \text{Im}\,\tilde \zeta)\text{Im}\,\tilde \zeta |\le C\delta, 
\]
where $C>0$ depends on the angle between $\text{Im}\,\zeta$ and $\text{Im}\, \tilde \zeta$,  and $n$. 
This together with \eqref{eq_600_7}, \eqref{eq_600_7_9000_1}, gives that 
\[
|x-p -((x-p)\cdot \text{Re}\,\zeta)\text{Re}\,\zeta|\le (C+\sqrt{n})\delta,
\]
 showing the claim. 

Multiplying \eqref{eq_600_5} by $\delta^{-(n-1)}$ and taking the limit as $\delta \to 0$, we deduce that
\[
\int_\R \sum_{j,k=1}^n e^{i 4\sigma t}\gamma_0^{-2}(p+t\text{Re}\,\zeta)\,T^{j k}(p+t\text{Re}\, \zeta) \zeta_{j}\,\tilde \zeta_{k}\,dt=0,
\]
where we have extended $T$ and $\gamma_0$ to all of $\R^n$ by setting them to be zero outside $\Omega$. Since the latter expression holds for all $\sigma \in \R$ we can use inverse Fourier transform in $\sigma$ to obtain that the integrand above should vanish for all $t$ and in particular at $t=0$. Finally, since $p \in \Omega$ is arbitrary, we conclude that 
\begin{equation}
\label{eq_600_8}
\sum_{j,k=1}^n T^{j k}\zeta_{j}\,\tilde \zeta_{k}=0\quad \text{in}\quad \Omega,
\end{equation}
for all $(\zeta,\tilde \zeta)\in \mathcal{A}$, $|\text{Re}\,\zeta|=|\text{Re}\,\tilde \zeta|=1$.  

Now let $\xi\in \R^n$ be arbitrary such that $|\xi|=1$. As $n\ge 3$, there are $\eta,\mu\in \R^n$ such that 
\begin{equation}
\label{eq_600_8_vectors}
|\eta|=|\mu|=1, \quad \xi\cdot\eta=\xi\cdot\mu=\eta\cdot\mu=0.
\end{equation}
 Since $(\xi+i\eta,\xi\pm i\mu)\in \mathcal{A}$, it follows from \eqref{eq_600_8} that 
\[
\tilde T(\xi+i\eta, \xi\pm i\mu) =0\quad \text{in}\quad \Omega,
\]
where $\tilde T$ is the rank two tensor with coefficients $T^{jk}$, $j,k=1,\dots, n$. Therefore, by linearity, 
\[
\tilde T(\xi+i\eta, \xi) =0\quad \text{in}\quad \Omega.
\]
Changing $\eta$ to $-\eta$, by linearity, we get 
\begin{equation}
\label{eq_600_9}
\tilde T(\xi, \xi) =0\quad \text{in}\quad \Omega.
\end{equation}
Since $\xi\in \R^n$ is arbitrary vector, $|\xi|=1$,  by linearity and polarization of \eqref{eq_600_9}, we conclude that 
\begin{equation}
\label{eq_600_10}
\tilde T(\xi, \tilde \xi) =0\quad \text{in}\quad \Omega, \quad \xi,\tilde \xi\in \R^n.
\end{equation}
Hence, it follows from \eqref{int_iden_density_m=2_0}, \eqref{int_iden_density_m=2_0_T_0_j}, and \eqref{eq_600_10}
 that $T=0$ in $\Omega$. This completes the proof of Proposition~\ref{prop1} in the case $m= 2$.

\subsection{Proof of Proposition~\ref{prop1} in the case $m\geq 3$} Here we shall proceed by induction on $m$. To that end, we assume that Proposition~\ref{prop1} holds for $m-1$, and we shall prove that it holds for $m$. 

First letting $u_{m+1}=u_m=\dots=u_2=1$ in \eqref{int_iden_density}, we see that $T^{0\dots 0}=0$. Using that $T^{0\dots 0}=0$ and letting $u_{m+1}=u_m=\dots=u_3=1$ in \eqref{int_iden_density}, we show that the tensor of rank one with coefficients $T^{j0\dots 0}$, $j=1,\dots, n$, is equal to zero. Proceeding in the same way and finally letting $u_{m+1}=1$, by the induction hypothesis, we get that the tensor of rank $m-1$, whose coefficients are $T^{j_1\dots j_{m-1}0}$, $j_1,\dots, j_{m-1}=1,\dots, n$, is equal to zero.  In view of all of these, the integral identity  \eqref{int_iden_density} becomes
\begin{equation}
\label{int_iden_density_main_900}
\sum_{(l_1,\dots, l_{m+1})\in \pi(m+1)}\sum_{j_1,\dots, j_{m}=1}^n \int_\Omega
  T^{j_1\dots j_m}(x)
   \p_{x_{j_1}}u_{l_1}\dots \p_{x_{j_m}}u_{l_{m}}
   \nabla u_{l_{m+1}}\cdot \nabla u_{m+2}dx=0,
\end{equation}
for all  $u_l\in C^\infty(\overline{\Omega})$, $l=1,\dots, m+2$,  solving \eqref{lin_rand}. To show that the identity \eqref{int_iden_density_main_900} implies that the tensor of rank $m$ with coefficients $T^{j_1\dots j_m}$, $j_1,\dots, j_{m}=1,\dots, n$, vanishes in $\Omega$, we shall first prove the following result. 

\begin{lem}
\label{lem_1}
Assume that the integral identity \eqref{int_iden_density_main_900} holds for all $u_l\in C^\infty(\overline{\Omega})$, $l=1,\dots, m+2$,  solving \eqref{lin_rand}. Then 
\begin{equation}
\label{eq_900_6}
\sum_{j_1,\dots, j_{m}=1}^n T^{j_1\dots j_m}   \zeta_{j_1}\zeta_{j_2}\dots, \zeta_{j_{m-1}}\tilde \zeta_{j_m}=0\quad \text{in}\quad \Omega,
\end{equation}
for all $(\zeta,\tilde \zeta)\in \mathcal{A}$, $|\emph{\text{Re}}\,\zeta|=|\emph{\text{Re}}\,\tilde \zeta|=1$, where $\mathcal A$ is as in Definition~\ref{zeta_pair}.
\end{lem}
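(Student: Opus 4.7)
The plan is to generalize the $m=2$ construction of Section~\ref{section_prop1} by inserting $m+2$ CGO solutions whose exponent vectors sum to zero, so that substituting them into \eqref{int_iden_density_main_900} produces no net exponential growth and leaves a controllable polynomial in $\lambda$. Fix $p\in\Omega$, an admissible pair $(\zeta,\tilde\zeta)\in\mathcal{A}$ with $|\text{Re}\,\zeta|=|\text{Re}\,\tilde\zeta|=1$, $\sigma\in\R$, and $\delta\in(0,1)$. Applying Lemma~\ref{lem_cgo}, I would take, for $\lambda$ large,
\begin{equation*}
u_1=\cdots=u_{m-1}=U_{\lambda\zeta},\quad u_m=U_{\lambda\tilde\zeta},\quad u_{m+1}=U_{-(m-1)\lambda\zeta},\quad u_{m+2}=U_{-\lambda\tilde\zeta},
\end{equation*}
where $u_1,\dots,u_{m-1},u_{m+1}$ all use the amplitude $a$ from \eqref{h_form} and $u_m,u_{m+2}$ use the amplitude $\tilde a$ from \eqref{h_tilde_form}; the transport equation $\zeta\cdot\nabla a=0$ automatically implies $(-(m-1)\zeta)\cdot\nabla a=0$, so Lemma~\ref{lem_cgo} applies in every case. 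The exponent vectors manifestly sum to zero.

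Next, I would extract the leading $\lambda^{m+2}$ term. By \eqref{eq_500_10}--\eqref{eq_500_11}, $\partial_{x_j}u_k = \lambda\zeta_{k,j}\gamma_0^{-1/2}a_k e^{\lambda\zeta_k\cdot x}(1+O(\lambda^{-1}))$, where $\zeta_k$ denotes the exponent vector of $u_k$. After multiplying \eqref{int_iden_density_main_900} by $\lambda^{-(m+2)}$ and sending $\lambda\to\infty$, the identity collapses to
\begin{equation*}
\int_\Omega \gamma_0^{-\frac{m+2}{2}}a^m\tilde a^2\, \Bigl[\sum_{(l_1,\dots,l_{m+1})\in\pi(m+1)} T^{j_1\dots j_m}\zeta_{l_1,j_1}\cdots\zeta_{l_m,j_m}(\zeta_{l_{m+1}}\cdot\zeta_{m+2})\Bigr]\, dx=0,
\end{equation*}
with $j_1,\dots,j_m$ summed. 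The combinatorial heart of the argument is to evaluate the bracket. For each fixed $l_{m+1}=k$, the $m!$ permutations of the remaining slots together with the symmetry of $T$ yield $m!\cdot T(\{\zeta_i\}_{i\ne k})$. The case $k=m$ vanishes since $\zeta_m\cdot\zeta_{m+2}=\tilde\zeta\cdot(-\tilde\zeta)=0$; the cases $k=1,\dots,m-1$ each contribute $(m-1)(\zeta\cdot\tilde\zeta)T(\zeta^{m-1},\tilde\zeta)$ after pulling the scalar $-(m-1)$ out of one slot; and $k=m+1$ contributes $(m-1)(\zeta\cdot\tilde\zeta)T(\zeta^{m-1},\tilde\zeta)$. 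Summing, the bracket equals $m!\,m(m-1)(\zeta\cdot\tilde\zeta)\,T^{j_1\dots j_m}\zeta_{j_1}\cdots\zeta_{j_{m-1}}\tilde\zeta_{j_m}$. Admissibility combined with $|\text{Re}\,\zeta|=1$ gives $\zeta\cdot\tilde\zeta=1-\text{Im}\,\zeta\cdot\text{Im}\,\tilde\zeta>0$ by Cauchy--Schwarz and the linear independence of $\text{Im}\,\zeta,\text{Im}\,\tilde\zeta$, so this prefactor is nonzero for every $m\ge 2$.

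The argument then closes as in Section~\ref{section_prop1}. The product $a^m\tilde a^2$ is supported in a $\delta$-neighborhood of $\Pi_\zeta\cap\Pi_{\tilde\zeta}$, which by the estimate leading to \eqref{eq_600_7_9000_2} is a $C\delta$-neighborhood of the ray $\{p+t\,\text{Re}\,\zeta\,:\,t\in\R\}$. Rescaling by $\delta^{-(n-1)}$ and letting $\delta\to 0$, the integral localizes on this ray---where $\zeta\cdot(x-p)=\tilde\zeta\cdot(x-p)=t$ because $\text{Re}\,\zeta=\text{Re}\,\tilde\zeta$ and both imaginary parts are perpendicular to $\text{Re}\,\zeta$---and reduces to
\begin{equation*}
C\int_\R e^{i(m+2)\sigma t}\bigl(T^{j_1\dots j_m}\zeta_{j_1}\cdots\zeta_{j_{m-1}}\tilde\zeta_{j_m}\bigr)(p+t\,\text{Re}\,\zeta)\,\gamma_0^{-\frac{m+2}{2}}(p+t\,\text{Re}\,\zeta)\, dt=0
\end{equation*}
for some $C>0$ and every $\sigma\in\R$. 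Fourier inversion in $\sigma$, evaluation at $t=0$, positivity of $\gamma_0$, and the arbitrariness of $p\in\Omega$ yield \eqref{eq_900_6}. I expect the main obstacle to be the symmetry bookkeeping that collapses the $(m+1)!$-term permutation sum to a single nonzero contraction; this is precisely what dictates the choice of exponent $-(m-1)\lambda\zeta$ for $u_{m+1}$, and once it is in hand the rest of the argument is a direct adaptation of the $m=2$ case.
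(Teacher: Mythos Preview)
Your proof is correct and follows essentially the same route as the paper's: the same CGO solutions (up to the harmless relabeling $u_m\leftrightarrow u_{m+1}$, which is irrelevant since the identity symmetrizes over the first $m+1$ slots), the same localization to the ray $p+t\,\text{Re}\,\zeta$, and the same Fourier-inversion endgame. The only difference is that you spell out the combinatorial collapse to $m!\,m(m-1)(\zeta\cdot\tilde\zeta)\,T(\zeta^{m-1},\tilde\zeta)$ explicitly, whereas the paper simply asserts the resulting identity.
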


\begin{proof}
To prove \eqref{eq_900_6} we shall use suitable CGO solutions to \eqref{lin_rand} defined as in the proof of Proposition~\ref{prop1} in the case $m=2$. To that end, we let $p \in \Omega$,  $(\zeta,\widetilde \zeta) \in \mathcal A$, $|\text{Re}\,\zeta|=|\text{Re}\,\tilde \zeta|=1$,  $\sigma \in \R$, $\delta \in (0,1)$, and let  $\{\omega_j\}_{j=1}^{n-2}$ and $\{\widetilde{\omega}_j\}_{j=1}^{n-2}$ be  orthonormal sets that satisfy \eqref{eq_600_-10} and \eqref{eq_600_-10_1}, respectively.  Taking $\lambda>0$ sufficiently large, we set 
\begin{equation}
\label{eq_900_1}
u_1=u_2=\ldots=u_{m-1}=U_{\lambda\zeta}, \quad u_m= U_{-(m-1)\lambda\zeta},\quad u_{m+1}= U_{\lambda\tilde\zeta},\quad u_{m+2}=\widetilde U_{-\lambda\tilde\zeta},
\end{equation}
where $U_{\lambda\zeta}, U_{\lambda\tilde\zeta},  U_{-\lambda\tilde\zeta}\in C^\infty(\overline{\Omega})$ are given by \eqref{eq_600_1} and $U_{-(m-1)\lambda\zeta}\in C^\infty(\overline{\Omega})$ is given by 
\begin{equation}
\label{eq_900_2}
U_{-\lambda (m-1)\zeta}(x)= e^{-(m-1)\lambda\zeta \cdot x}\gamma_0(x)^{-\frac{1}{2}}\left( a(x)+r_{-\lambda(m-1)\zeta}(x)\right),
\end{equation}
where $a\in C^\infty(\R^n)$ is defined by \eqref{h_form}, and 
\begin{equation}
\label{eq_900_3}
\|r_{-\lambda(m-1)\zeta}\|_{C^1(\overline{\Omega})}=\mathcal{O}(\lambda^{-1}), 
\end{equation}
as $\lambda\to \infty$. The existence of such CGO solutions follows from Lemma \ref{lem_cgo}. 

Substituting \eqref{eq_900_1}, \eqref{eq_600_1}, \eqref{eq_900_2},  into the integral identity \eqref{int_iden_density_main_900}, multiplying by $\lambda^{-(m+2)}$, $\lambda\to \infty$, and using \eqref{eq_600_2}, \eqref{eq_600_3}, \eqref{eq_900_3},  we get  
\[
(\zeta\cdot\tilde \zeta)\int_\Omega \sum_{j_1,\dots, j_{m}=1}^n T^{j_1\dots j_m}\zeta_{j_1}\zeta_{j_2}\dots, \zeta_{j_{m-1}}\tilde \zeta_{j_m}\gamma_0^{-\frac{(m+2)}{2}}a^{m}\widetilde{a}^2dx=0,
\]
and therefore,
\begin{equation}
\label{eq_900_4}
\int_\Omega \sum_{j_1,\dots, j_{m}=1}^n T^{j_1\dots j_m}\zeta_{j_1}\zeta_{j_2}\dots, \zeta_{j_{m-1}}\tilde \zeta_{j_m}\gamma_0^{-\frac{(m+2)}{2}} Fdx=0.
\end{equation}
Here 
\[
F(x)=a^m(x) \widetilde{a}^2(x)=e^{i \sigma(m\zeta+2\tilde \zeta)\cdot (x-p)}\prod_{j=1}^{n-2}\chi\bigg(\frac{\omega_j\cdot (x-p)}{\delta}\bigg)^m \chi\bigg(\frac{\widetilde{\omega}_j\cdot (x-p)}{\delta}\bigg)^2, 
\]
$\delta\in (0,1)$ and $\sigma\in \R$. As $(\zeta,\tilde \zeta)\in \mathcal{A}$, we have $\supp(F)$ is contained in  a $\delta$--neighborhood of the ray \eqref{eq_600_6}, cf. the discussion after \eqref{eq_600_6}.

Multiplying \eqref{eq_900_4} by $\delta^{-(n-1)}$ and taking the limit as $\delta \to 0$, we deduce that
\begin{equation}
\label{eq_900_5}
\begin{aligned}
 \int_{\R} \sum_{j_1,\dots, j_{m}=1}^n &e^{i (m+2)\sigma t} \gamma_0^{-\frac{(m+2)}{2}}(p+t\text{Re}\,\zeta) \\
 &T^{j_1\dots j_m} (p+t\text{Re}\, \zeta)  \zeta_{j_1}\zeta_{j_2}\dots, \zeta_{j_{m-1}}\tilde \zeta_{j_m} \,dt=0,
\end{aligned}
\end{equation}
where we have extended $T^{j_1\dots j_m}$ and $\gamma_0$ to all of $\R^n$ by setting them to be zero outside $\Omega$. Now \eqref{eq_900_6} follows from \eqref{eq_900_5}, cf. the discussion before \eqref{eq_600_8}.
\end{proof}

\begin{rem}
When $m\ge 3$, one cannot conclude from \eqref{eq_900_6} directly that $T^{j_1\dots j_m}=0$ for all $j_1,\dots, j_{m}=1,\dots, n$. Indeed, taking $T^{j_1\dots j_m}=\delta_{j_1 j_2}$ for all $j_3,\dots, j_m=1,\dots, n$, we see that \eqref{eq_900_6} holds, as $\zeta\cdot\zeta=0$. 
\end{rem}

Hence, to show that $T^{j_1\dots j_m}=0$ for all $j_1,\dots, j_{m}=1,\dots, n$, we shall rely on the two lemmas below. 
\begin{lem}
\label{lem_first_two_indices}
Assume that the integral identity \eqref{int_iden_density_main_900} holds for all $u_l\in C^\infty(\overline{\Omega})$, $l=1,\dots, m+2$,  solving \eqref{lin_rand}. Then we have 
\begin{equation}
\label{eq_900_6_3}
\sum_{j_1,\ldots,j_m=1}^n T^{j_1\ldots j_m} \zeta_{j_1}\,\widetilde{\zeta}_{j_2}\, \p_{ x_{j_3}} u \ldots \p_{ x_{j_m}}u=0 \quad\text{in}\quad \Omega,
\end{equation}
for all  $(\zeta,\tilde \zeta)\in \mathcal{A}$, $|\emph{\text{Re}}\,\zeta|=|\emph{\text{Re}}\,\tilde \zeta|=1$, and all $u\in C^\infty(\overline{\Omega})$ solving \eqref{lin_rand}.
\end{lem}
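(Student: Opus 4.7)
The plan is to adapt the CGO substitution used in the proof of Lemma \ref{lem_1} so that $m-2$ of the slots are filled by the given arbitrary solution $u$, while the remaining four slots carry CGO solutions whose phases balance. Concretely, I fix $p\in\Omega$ and $(\zeta,\tilde\zeta)\in\mathcal{A}$ with $|\text{Re}\,\zeta|=|\text{Re}\,\tilde\zeta|=1$, and set
$$u_1 = U_{\lambda\zeta},\quad u_2 = U_{\lambda\tilde\zeta},\quad u_3=\cdots=u_m=u,\quad u_{m+1}=U_{-\lambda\zeta},\quad u_{m+2}=U_{-\lambda\tilde\zeta},$$
with the four CGOs constructed via Lemma \ref{lem_cgo} using the amplitudes $a$ and $\tilde a$ from \eqref{h_form} and \eqref{h_tilde_form}. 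The four exponentials cancel, and since each differentiated CGO contributes one power of $\lambda$ while derivatives of $u$ contribute none, the dominant order of \eqref{int_iden_density_main_900} after substitution is $\lambda^4$.

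I then organize the sum over permutations $(l_1,\dots,l_{m+1})$ by the label carried by $l_{m+1}$: the permutation with $l_{m+1}=2$ vanishes at leading order because $\tilde\zeta\cdot\tilde\zeta=0$; the permutations with $l_{m+1}\in\{1,m+1\}$ combine, by the symmetry of $T$ and after summing over the $m!$ placements of the remaining labels, into a multiple of $(\zeta\cdot\tilde\zeta)\,T(\zeta,\tilde\zeta,\nabla u,\dots,\nabla u)\,F\,\gamma_0^{-2}$ with $F=a^2\tilde a^2$ supported in a $\delta$-neighborhood of the ray $\{p+t\,\text{Re}\,\zeta:t\in\mathbb{R}\}$; and the permutations with $l_{m+1}\in\{3,\dots,m\}$ (the $u$-slots) contribute an extra term proportional to $T(\zeta,\tilde\zeta,\zeta,\nabla u,\dots,\nabla u)\,(\tilde\zeta\cdot\nabla u)\,F\,\gamma_0^{-2}$. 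Dividing by $\lambda^4$, sending $\lambda\to\infty$, multiplying by $\delta^{-(n-1)}$ and letting $\delta\to 0$, and finally applying the inverse Fourier transform in $\sigma$ at $t=0$, I obtain the pointwise identity
\begin{equation*}
2(\zeta\cdot\tilde\zeta)\,T(\zeta,\tilde\zeta,\nabla u,\dots,\nabla u)(p)+(m-2)\,T(\zeta,\tilde\zeta,\zeta,\nabla u,\dots,\nabla u)(p)\,(\tilde\zeta\cdot\nabla u)(p)=0
\end{equation*}
at the arbitrary point $p\in\Omega$.

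To eliminate the extra term and conclude \eqref{eq_900_6_3}, I observe that when $m=3$ the factor $T(\zeta,\tilde\zeta,\zeta)=T(\zeta,\zeta,\tilde\zeta)=0$ by the symmetry of $T$ and Lemma \ref{lem_1}, so the conclusion is immediate. For $m\ge 4$, I proceed by a downward induction on $a$ to establish $T(\zeta,\tilde\zeta,\zeta^a,(\nabla u)^{m-2-a})(p)=0$ for $a=m-2,m-3,\dots,1,0$: the base case $a=m-2$ is Lemma \ref{lem_1} combined with the symmetry of $T$, and the induction step runs the same CGO argument as above but with $a+1$ copies of $U_{\lambda\zeta}$, $m-2-a$ copies of $u$, and the two balancing CGOs $U_{-(a+1)\lambda\zeta}$ and $U_{-\lambda\tilde\zeta}$ (an exact generalization of the substitution used in Lemma \ref{lem_1}). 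The extra term produced at step $a$ is then proportional to $T(\zeta,\tilde\zeta,\zeta^{a+1},(\nabla u)^{m-3-a})\,(\tilde\zeta\cdot\nabla u)$, which vanishes by the induction hypothesis at step $a+1$. The main obstacle is the careful combinatorial bookkeeping of the permutation sums at each inductive step: identifying the leading-order permutations, verifying the exact form of the extra contribution, and correctly tracking the signs and multiplicities coming from $\pi(m+1)$.
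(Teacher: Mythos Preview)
Your argument is correct and is essentially the paper's own proof: a downward induction on the number of $\zeta$-slots, using at each step the balanced CGO substitution $\{U_{\lambda\zeta}^{\otimes k},\,U_{-k\lambda\zeta},\,U_{\lambda\tilde\zeta},\,u^{\otimes(m-1-k)},\,U_{-\lambda\tilde\zeta}\}$ so that the ``extra'' term is exactly what was killed at the previous step (your parameter $a$ is the paper's $k-1$). The only slip is that your listing of the inductive substitution omits $U_{\lambda\tilde\zeta}$ (without it the phases do not cancel and you have only $m+1$ functions), but since you explicitly invoke ``the same CGO argument as above'' and ``an exact generalization of the substitution used in Lemma~\ref{lem_1}'', this is clearly a write-up omission rather than a gap.
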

\begin{proof}
We shall prove this lemma by induction. To that end, letting $(\zeta,\tilde \zeta)\in \mathcal{A}$, $|\text{Re}\,\zeta|=|\text{Re}\,\tilde \zeta|=1$, and letting $u\in C^\infty(\overline{\Omega})$ satisfy \eqref{lin_rand}, we assume that the following holds
\begin{equation}
\label{eq_900_13}
\sum_{j_1,\ldots,j_m=1}^n T^{j_1\ldots j_m}  
 \zeta_{j_1}\ldots \zeta_{j_{s}}\widetilde{\zeta}_{j_{s+1}}\p_{x_{j_{s+2}}}u\ldots \p_{x_{j_{m}}}u=0\quad \text{in}\quad \Omega,
\end{equation}
for $s=m-1,\dots, k+1$, with some  $1\le k\le m-2$. Note that \eqref{eq_900_13} with $s=m-1$ corresponds to  \eqref{eq_900_6} and is the basis for the induction. We shall prove that \eqref{eq_900_13} holds for $s=k$. In doing so we test the integral identity  \eqref{int_iden_density_main_900} with suitable choice of CGO solutions to the linear conductivity equation  \eqref{lin_rand} and use \eqref{eq_900_13}. Specifically,  taking $\lambda>0$ sufficiently large, we set
\begin{equation}
\label{eq_900_7}
u_1=u_2=\ldots=u_{k}=U_{\lambda\zeta}, \quad u_{k+1}= U_{-\lambda k\zeta},\quad u_{k+2}= U_{\lambda\tilde \zeta},\quad  u_{m+2}=U_{-\lambda\tilde \zeta},
\end{equation}
while 
\begin{equation}
\label{eq_900_8}
 u_{k+3}=\dots=u_{m+1}=u.
 \end{equation}
Here $U_{\lambda\zeta}, U_{\lambda\tilde\zeta},  U_{-\lambda\tilde\zeta}\in C^\infty(\overline{\Omega})$ are given by \eqref{eq_600_1} and $U_{-\lambda k\zeta}\in C^\infty(\overline{\Omega})$ is given by \eqref{eq_900_2} with $(m-1)$ being replaced by $k$.

Substituting \eqref{eq_900_7}, \eqref{eq_900_8}, \eqref{eq_600_1}, \eqref{eq_900_2},  into the integral identity \eqref{int_iden_density_main_900}, multiplying by $\lambda^{-(k+3)}$, letting $\lambda\to \infty$,  and using \eqref{eq_600_2}, \eqref{eq_600_3}, \eqref{eq_900_3},  we get  
\begin{equation}
\label{eq_900_9}
\begin{aligned}
&c_{m,k}\,(\zeta\cdot \widetilde\zeta) \underbrace{\int_{\Omega}\sum_{j_1,\ldots,j_m=1}^n \gamma_0^{-\frac{(k+3)}{2}}F\,T^{j_1\ldots j_m} \zeta_{j_1}\ldots \zeta_{j_{k}}\widetilde{\zeta}_{j_{k+1}}\p_{x_{j_{k+2}}}u \dots \p_{x_{j_m}}u\,dx}_{\textrm{I}}\\
&+d_{m,k}\underbrace{\int_{\Omega}\sum_{j_1,\ldots,j_m=1}^n \gamma_0^{-\frac{(k+3)}{2}}F\,T^{j_1\ldots j_m} \zeta_{j_1}\ldots \zeta_{j_{k+1}}\widetilde{\zeta}_{j_{k+2}} \p_{x_{j_{k+3}}}u \dots \p_{x_{j_m}}u \nabla u\cdot \widetilde \zeta\,dx}_{\textrm{II}}=0
\end{aligned}
\end{equation}
with some non-zero constants $c_{m,k}$ and $d_{m,k}$ that only depend on $m$ and $k$. Here, the function $F$ is given by 
\begin{align*}
F(x)&=a^{k+1}(x) \widetilde{a}^2(x)\\
&=e^{i \sigma((k+1)\zeta+2\tilde \zeta)\cdot (x-p)}\prod_{j=1}^{n-2}\chi\bigg(\frac{\omega_j\cdot (x-p)}{\delta}\bigg)^{k+1} \chi\bigg(\frac{\widetilde{\omega}_j\cdot (x-p)}{\delta}\bigg)^2, 
\end{align*}
$\delta\in (0,1)$ and $\sigma\in \R$. It follows from \eqref{eq_900_13} with $s=k+1$ that II in \eqref{eq_900_9} vanishes and we conclude that $I$ must also vanish, i.e. 
\begin{equation}
\label{eq_900_10}
\int_{\Omega}\sum_{j_1,\ldots,j_m=1}^n \gamma_0^{-\frac{(k+3)}{2}}F\,T^{j_1\ldots j_m} \zeta_{j_1}\ldots \zeta_{j_{k}}\widetilde{\zeta}_{j_{k+1}}\p_{x_{j_{k+2}}}u \dots \p_{x_{j_m}}u\,dx=0.
\end{equation}

Multiplying \eqref{eq_900_10} by $\delta^{-(n-1)}$ and taking the limit as $\delta \to 0$, we observe that
\begin{equation}
\label{eq_900_11}
\begin{aligned}
 \int_{\R} \sum_{j_1,\ldots,j_m=1}^n &e^{i\sigma t (k+3)}\gamma_0^{-\frac{(k+3)}{2}}(p+t\Re\zeta)T^{j_1\ldots j_m}  (p+t\Re\zeta)\\
&\zeta_{j_1}\ldots \zeta_{j_{k}}\widetilde{\zeta}_{j_{k+1}} \p_{x_{j_{k+2}}}u(p+t\Re\zeta) \dots
 \p_{x_{j_m}}u (p+t\Re\zeta) \,dt=0,
\end{aligned}
\end{equation}
where we have extended $T^{j_1\dots j_m}$ and $\gamma_0$ to all of $\R^n$ by setting them to be zero outside $\Omega$. As above, cf. the discussion before \eqref{eq_600_8}, we conclude from \eqref{eq_900_11} that 
\begin{equation}
\label{eq_900_12}
\sum_{j_1,\ldots,j_m=1}^n T^{j_1\ldots j_m}  
 \zeta_{j_1}\ldots \zeta_{j_{k}}\widetilde{\zeta}_{j_{k+1}} \p_{x_{j_{k+2}}}u \dots
 \p_{x_{j_m}}u =0\quad \text{in}\quad \Omega. 
\end{equation}
This shows that \eqref{eq_900_13} holds for $s=k$.  The proof of Lemma \ref{lem_first_two_indices} is completed by setting $s=1$ in  \eqref{eq_900_13}.
\end{proof}

\begin{lem}
\label{lem_full}
Assume that \eqref{eq_900_6_3} holds for  all  $(\zeta,\tilde \zeta)\in \mathcal{A}$, $|\emph{\text{Re}}\,\zeta|=|\emph{\text{Re}}\,\tilde \zeta|=1$, and all $u\in C^\infty(\overline{\Omega})$ solving \eqref{lin_rand}. Then the tensor $\tilde T=0$ in $\Omega$, where $\tilde T$ is the tensor of rank $m$ with coefficients $T^{j_1 \dots j_m}$, $j_1,\dots, j_m=1,\dots, n$.  
\end{lem}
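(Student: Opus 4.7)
The plan is to reduce \eqref{eq_900_6_3} to a fully multilinear identity for $T$ at each point of $\Omega$, and then adapt the argument already used in the $m=2$ case. The key idea is that in \eqref{eq_900_6_3} all factors $\p_{x_{j_l}}u$ come from a single solution $u$, but polarization decouples them into $m-2$ independent solutions, each of which can be chosen as a CGO solution with a distinct null phase.

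First I would polarize in $u$. Since \eqref{eq_900_6_3} is homogeneous of degree $m-2$ in $u$ and $T^{j_1\dots j_m}$ is symmetric in $j_3,\dots,j_m$, substituting $u=t_3u_3+\dots+t_mu_m$ and extracting the coefficient of $t_3\cdots t_m$ gives
\[
\sum_{j_1,\dots,j_m=1}^n T^{j_1\dots j_m}(x)\,\zeta_{j_1}\widetilde\zeta_{j_2}\,\p_{x_{j_3}}u_3(x)\cdots\p_{x_{j_m}}u_m(x)=0,\qquad x\in\Omega,
\]
for all solutions $u_3,\dots,u_m$ of \eqref{lin_rand} and all admissible $(\zeta,\widetilde\zeta)$. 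For any null $\eta_l\in\C^n\setminus\{0\}$, Lemma \ref{lem_cgo} with $a\equiv 1$ provides CGO solutions $U_{\lambda\eta_l}=e^{\lambda\eta_l\cdot x}\gamma_0^{-1/2}(1+r_{\lambda\eta_l})$ with $\|r_{\lambda\eta_l}\|_{C^1(\overline\Omega)}=O(\lambda^{-1})$. Substituting $u_l=U_{\lambda\eta_l}$, using
\[
\p_{x_{j_l}}U_{\lambda\eta_l}(x)=\lambda(\eta_l)_{j_l}\,e^{\lambda\eta_l\cdot x}\gamma_0^{-1/2}(x)+O(1)\cdot e^{\lambda\eta_l\cdot x},
\]
dividing by the non-vanishing factor $\lambda^{m-2}e^{\lambda(\eta_3+\dots+\eta_m)\cdot x}\gamma_0^{-(m-2)/2}(x)$, and letting $\lambda\to\infty$ produces, pointwise in $x\in\Omega$,
\[
\sum_{j_1,\dots,j_m=1}^n T^{j_1\dots j_m}(x)\,\zeta_{j_1}\widetilde\zeta_{j_2}(\eta_3)_{j_3}\cdots(\eta_m)_{j_m}=0
\]
for every admissible $(\zeta,\widetilde\zeta)$ with $|\text{Re}\,\zeta|=|\text{Re}\,\widetilde\zeta|=1$ and all null $\eta_3,\dots,\eta_m\in\C^n$.

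Since the null vectors in $\C^n$ span $\C^n$ (for instance $e_j\pm ie_k$ are null and sum to $2e_j$), and the identity is separately linear in each $\eta_l$, the null restriction can be dropped, yielding $\widetilde T(x)(\zeta,\widetilde\zeta,v_3,\dots,v_m)=0$ for all admissible $(\zeta,\widetilde\zeta)$ and all $v_3,\dots,v_m\in\C^n$. Freezing $v_3,\dots,v_m$, the remaining condition on the first two slots is precisely the one handled in the $m=2$ case: choosing orthogonal unit vectors $\xi,\eta,\mu\in\R^n$ (possible since $n\ge 3$), the pairs $(\xi\pm i\eta,\xi\pm i\mu)$ all lie in $\mathcal A$, and combining the resulting identities via bilinearity gives $\widetilde T(x)(\xi,\xi,v_3,\dots,v_m)=0$; then homogeneity in $\xi$, polarization in the first two slots using the full symmetry of $T$, and $\C$-bilinearity yield $\widetilde T\equiv 0$ on $\Omega$.

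The main delicate point I expect is the asymptotic bookkeeping at the CGO substitution, where one must ensure that after normalizing by $\lambda^{m-2}e^{\lambda(\eta_3+\dots+\eta_m)\cdot x}\gamma_0^{-(m-2)/2}$ all subleading contributions vanish as $\lambda\to\infty$; this uses the $C^1$-remainder bound from Lemma \ref{lem_cgo} together with the product rule. The subsequent passage from null-vector vanishing to full multilinear vanishing, and then the reduction to the $m=2$ argument on the first two slots, are purely algebraic steps and follow the framework of the earlier cases.
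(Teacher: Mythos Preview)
Your proof is correct and uses the same ingredients as the paper's proof of Lemma~\ref{lem_full}: polarization in $u$, CGO substitution with amplitude $a\equiv 1$ and $C^1$-remainder estimates, the observation that null vectors span $\C^n$, and the $m=2$ argument for the admissible pair $(\zeta,\tilde\zeta)$ in the first two slots. The only difference is the order of operations: the paper first handles the first two slots (reducing $(\zeta,\tilde\zeta)$ to arbitrary real vectors via the $m=2$ argument while $u$ is still fixed, obtaining \eqref{eq_900_17}), then polarizes in $u$ and substitutes CGO solutions in the last $m-2$ slots; you instead polarize and substitute CGO solutions in the last $m-2$ slots first, and only afterwards treat the first two slots. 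Both orderings are valid and lead to the same conclusion, so the proofs are essentially equivalent.
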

\begin{proof}
First, arguing as after \eqref{eq_600_8}, we conclude from \eqref{eq_900_6_3}  that 
\begin{equation}
\label{eq_900_17}
 \sum_{j_1,j_2,j_3,\ldots,j_m=1}^n T^{j_1j_2 j_{3}\ldots j_{m}} \xi^{(1)}_{j_1}\xi^{(2)}_{j_2} \p_{ x_{j_3}} u \ldots \p_{ x_{j_m}}u=0 \quad\text{in}\quad \Omega,
\end{equation}
for all $\xi^{(1)},\xi^{(2)}\in \R^n$. Next  via polarization of \eqref{eq_900_17}, see \cite{Th14},  we obtain that
\begin{equation}
\label{eq_900_18}
 \sum_{j_1,j_2,j_3,\ldots,j_m=1}^n T^{j_1j_2 j_{3}\ldots j_{m}} \xi^{(1)}_{j_1}\xi^{(2)}_{j_2} \p_{ x_{j_3}} u_3 \ldots \p_{ x_{j_m}}u_m=0 \quad\text{in}\quad \Omega,
\end{equation}
for all $u_3,\dots, u_m \in C^\infty(\overline{\Omega})$ solving \eqref{lin_rand}. .

Letting $\lambda>0$ sufficiently large,  $\zeta^{(j)}\in \C^n$ be  such that $\zeta^{(j)}\cdot\zeta^{(j)} =0$, $|\text{Re}\,\zeta^{(j)}|=|\text{Im}\,\zeta^{(j)}|=1$, $j=3,\dots, m$, we set 
\begin{equation}
\label{eq_900_19}
u_{j}=U_{\lambda\zeta^{(j)}}=e^{\lambda\zeta^{(j)}\cdot x}\gamma_0^{-1/2}(1+r_{\lambda\zeta^{(j)}})\in C^\infty(\overline{\Omega}), \quad j=3,\dots, m,
\end{equation}
with $\|r_{\lambda\zeta^{(j)}}\|_{C^1(\overline{\Omega})}=\mathcal{O}(\lambda^{-1})$, as $\lambda\to \infty$. 

Substituting \eqref{eq_900_19} into \eqref{eq_900_18}, and multiplying by $\lambda^{-(m-2)}$, we see that 
\begin{equation}
\label{eq_900_20}
 \sum_{j_1,j_2,j_3,\ldots,j_m=1}^n T^{j_1j_2 j_{3}\ldots j_{m}} \xi^{(1)}_{j_1}\xi^{(2)}_{j_2}\zeta^{(3)}_{j_3} \ldots \zeta^{(m)}_{j_m}=0 \quad\text{in}\quad \Omega.
\end{equation}
Let $\xi \in \R^n$ be arbitrary such that $|\xi|=1$, and choose $\eta \in \R^n$ such that $|\eta|=1$ and $\xi\cdot \eta=0$. Letting $\zeta=\xi+i\eta$, using that $\zeta\cdot\zeta=\overline{\zeta}\cdot\overline{\zeta}=0$ and linearity, we get from \eqref{eq_900_20} that 
\begin{equation}
\label{eq_900_21}
\tilde T(\xi^{(1)},\xi^{(2)}, 2\xi,\dots,2\xi)= \tilde T(\xi^{(1)},\xi^{(2)}, \zeta+\overline{\zeta},\dots, \zeta+\overline{\zeta})=0 \quad\text{in}\quad \Omega.
\end{equation}
By linearity and polarization of \eqref{eq_900_21}, we conclude that $\tilde T=0$ in $\Omega$. 
\end{proof}

Lemma \ref{lem_full} completes the proof of Proposition~\ref{prop1} in the case $m\ge 3$.

\begin{appendix}

\section{Proof of Lemma \ref{lem_cgo}}
\label{proof_CGO}

While Lemma \ref{lem_cgo} is known, see  \cite{BU10}, \cite[Proposition 2]{FO20},  \cite{LLLSa20}, \cite{KU20}, we shall present here a very simple proof of it  for the convenience of the reader.   In doing so we shall use the approach of \cite{H96} which is based on Fourier series, see also \cite{Sa08}, extending it to get good remainder estimates in an arbitrary Sobolev space $H^m(\Omega)$.  First we have, see \cite{SU87}, 
\begin{equation}
\label{eq_500_1_0}
-\gamma_0^{-1/2}\circ L_{\gamma_0}\circ \gamma_0^{-1/2}=-\Delta+q, \quad q=\frac{\Delta\gamma_0^{1/2}}{\gamma_0^{1/2}}\in C^\infty(\overline{\Omega}),
\end{equation}
where the conductivity operator $L_{\gamma_0}$ is defined as follows $L_{\gamma_0}:=\nabla\cdot (\gamma_0\nabla \cdot)$.  We would like to construct CGO solutions to the Schr\"odinger equation
\begin{equation}
\label{eq_500_1}
(-\Delta+q)u=0\quad \text{in}\quad \Omega,
\end{equation}
of the form 
\begin{equation}
\label{eq_500_2}
u_{\lambda\zeta}(x)=e^{\lambda\zeta\cdot x}(a(x)+r_{\lambda\zeta}(x)),
\end{equation}
where $\lambda>0$ is a large parameter, $ 0\ne \zeta\in \C^n$ is independent of $\lambda$ such that $\zeta\cdot\zeta=0$, $a$ is a smooth amplitude, and $r$ is the remainder term. Then it follows from \eqref{eq_500_1_0} that 
\[
U_{\lambda\zeta}(x)=e^{\lambda\zeta\cdot x}\gamma_0^{-1/2}(a(x)+r_{\lambda\zeta}(x))
\]
are CGO solutions to \eqref{linear_eq}.  Substituting \eqref{eq_500_2} into \eqref{eq_500_1}, we get 
\[
e^{-\lambda\zeta\cdot x}(-\Delta+q)e^{\lambda\zeta\cdot x}(a(x)+r_{\lambda\zeta}(x))=0  \quad \text{in}\quad \Omega,
\]
and therefore, setting $r=r_{\lambda\zeta}$, we have
\begin{equation}
\label{eq_500_3}
(-\Delta-2\lambda\zeta\cdot\nabla +q)r=-(-\Delta-2\lambda\zeta\cdot\nabla +q)a \quad \text{in}\quad \Omega.
\end{equation}

To solve \eqref{eq_500_3} we assume for simplicity that $\Omega \subset Q:=[-\pi,\pi]^n$. Note that everything works without this extra assumption if we replace $\Omega$ by its image under the map $\R^n\ni x\mapsto\kappa x\in \R^n$ for some sufficiently small fixed $\kappa>0$.  First we shall solve 
\begin{equation}
\label{eq_500_3_new}
(-\Delta-2\lambda\zeta\cdot\nabla )r=f \quad \text{in}\quad Q,
\end{equation}
where $f\in L^2(Q)$.  Writing $\zeta=\omega_1+i\omega_2$, $\omega_1,\omega_2\in \R^n$, we see that $\omega_1\cdot\omega_2=0$ and $|\omega_1|=|\omega_2|=:\alpha$. We may assume without loss of generality that $\omega_1=\alpha e_1$ and $\omega_2=\alpha e_2$, where $e_1$ and $e_2$ are the first two vectors in the standard basis of $\R^n$.  Thus, \eqref{eq_500_3_new} becomes
\begin{equation}
\label{eq_500_3_new_1}
(-\Delta-2\lambda \alpha \p_{x_1}-2i\lambda\alpha \p_{x_2} )r=f \quad \text{in}\quad Q.
\end{equation}
Letting $v_l(x)=e^{i(l+\frac{1}{2}e_1)\cdot x}$, $l\in \Z^n$, and noting that $(v_l)$ forms an orthonormal basis in $L^2(Q,dx/(2\pi)^n)$, see \cite{Sa08}, we have
\[
f=\sum_{l\in \Z^n}f_lv_l,
\]
where $f_l=(f,v_l)_{L^2(Q)}=(2\pi)^{-n}\int_{Q}f\overline{v_l}dx$, $\|f\|_{L^2(Q)}^2=\sum_{l\in\Z^n}|f_l|^2$. Looking for a solution $r$ of \eqref{eq_500_3_new_1} in the form $r=\sum_{l\in \Z^n}r_lv_l$, we are led to the following equation, 
\[
p_l r_l=f_l, \quad p_l=\bigg(l+\frac{1}{2}e_1\bigg)^2-i2\lambda\alpha \bigg(l_1+\frac{1}{2}\bigg)+2\lambda\alpha l_2, \quad l\in \Z^n.
\]
Using that $|\text{Im}p_l|\ge \lambda\alpha $ and letting $r_l:=f_l/p_l$, we get $|r_l|\le |f_l|/(\lambda \alpha)$. Thus, $\|r\|_{L^2(Q)}\le \frac{1}{\lambda \alpha}\|f\|_{L^2(Q)}$.

Now if $f\in H^m(Q)$, $m\ge 0$, where $H^m(Q)$ is the Sobolev space, equipped with the norm
\[
\|f\|_{H^m(Q)}^2=\|(1-\Delta)^{m/2}f\|_{L^2(Q)}^2=\sum_{l\in \Z^n}\bigg(1+\bigg|l+\frac{1}{2}e_1\bigg|^2\bigg)^m|f_l|^2,
\] 
we see that $\|r\|_{H^m(Q)}\le \frac{1}{\lambda\alpha}\|f\|_{H^m(Q)}$. 

Now letting $f\in H^m(\Omega)$ and extending it continuously to  $H^m(Q)$, we have constructed the solution 
$r\in H^m(\Omega)$ to  the equation
\begin{equation}
\label{eq_500__3_new_2}
(-\Delta-2\lambda\zeta\cdot\nabla )r=f \quad \text{in}\quad \Omega
\end{equation}
satisfying 
\begin{equation}
\label{eq_500__3_new_3}
\|r\|_{H^m(\Omega)}\le \frac{C}{\lambda}\|f\|_{H^m(\Omega)}. 
\end{equation}
We denote by $G_{\lambda\zeta}$ the solution operator
\[
G_{\lambda\zeta}:H^m(\Omega)\to H^m(\Omega), \quad f\mapsto r,
\]
where $r$ is the solution to \eqref{eq_500__3_new_2}   that we have just constructed. It follows from \eqref{eq_500__3_new_3} that 
\[
\|G_{\lambda\zeta}\|_{H^m(\Omega)\to H^m(\Omega)}=\mathcal{O}(\lambda^{-1}), \quad \lambda\to \infty.
\] 

To solve \eqref{eq_500_3},  first let $a\in C^\infty(\overline{\Omega})$ be any solution of the transport equation
\begin{equation}
\label{eq_500_4}
\zeta\cdot\nabla a=0 \quad \text{in}\quad \Omega.
\end{equation}
Then it follows from \eqref{eq_500_3} and \eqref{eq_500_4} that we would like to find $r$ such that 
\begin{equation}
\label{eq_500_5}
(-\Delta-2\lambda\zeta\cdot\nabla +q)r=-(-\Delta +q)a \quad \text{in}\quad \Omega.
\end{equation}
Note that $(-\Delta +q)a\in H^m(\Omega)$. 
Looking for a solution $r$ of \eqref{eq_500_5} in the form $r=G_{\lambda\zeta} \tilde r$, we get that $\tilde r$ should solve the equation 
\begin{equation}
\label{eq_500_8}
(I +qG_{\lambda\zeta})\tilde r=-(-\Delta +q)a \quad \text{in}\quad \Omega.
\end{equation}
As $q\in C^\infty(\overline{\Omega})$, we have $\|qG_{\lambda\zeta}\|_{H^m(\Omega)\to H^m(\Omega)}=\mathcal{O}(\lambda^{-1})$, as $\lambda\to \infty$. Then by Neumann series, for $\lambda>0$ sufficiently large, we see that \eqref{eq_500_8} has a solution $\tilde r\in H^m(\Omega)$ such that $\|\tilde r\|_{H^m(\Omega)}=\mathcal{O}(1)\|(-\Delta +q)a \|_{H^m(\Omega)}$. Therefore, $\|r\|_{H^m(\Omega)}=\mathcal{O}(\lambda^{-1})\|(-\Delta +q)a \|_{H^m(\Omega)}$, as $\lambda\to \infty$. 

Performing the above construction on a bounded open set $\tilde \Omega$ with $C^\infty$ boundary such that $\Omega\subset \subset\tilde \Omega$,  using elliptic regularity and the Sobolev embedding $H^{m}(\Omega)\subset C^1(\overline{\Omega})$, $m>n/2+1$, we complete the proof of  Lemma \ref{lem_cgo}.

\section{Well-posedness of the Dirichlet problem for a quasilinear conductivity equation}

\label{App_well_posedness}

The purpose of this appendix is to show the well-posedness of the Dirichlet problem for a quasilinear conductivity equation without analyticity assumptions. The argument is standard and is given here for completeness and convenience of the reader,  see \cite[Proposition 2.1]{LLLSa20} for similar arguments in the case of semilinear elliptic equations. 

Let $\Omega\subset \R^n$, $n\ge 2$, be a bounded open set with $C^\infty$ boundary. Let $k\in \N\cup \{0\}$ and $0<\alpha<1$ and let $C^{k,\alpha}(\overline{\Omega})$ be the standard H\"older space  on $\Omega$, see  \cite{Hormander_1976}.  We write $C^{\alpha}(\overline{\Omega})=C^{0,\alpha}(\overline{\Omega})$. 

Let $\rho\in \R$ and consider the Dirichlet problem for the following isotropic quasilinear conductivity equation, 
\begin{equation}
\label{eq_app_ref_1}
\begin{cases} 
\nabla\cdot  (\gamma(x,u, \nabla u)\nabla u)=0& \text{in}\quad \Omega,\\
u=\rho+f & \text{on}\quad \p \Omega.
\end{cases}
\end{equation}
We assume that the function $\gamma: \overline{\Omega} \times \R \times \R^n\to \R$ satisfies the following conditions:
\begin{enumerate}
\item[(A1)]{$0<\gamma(\cdot,\rho,0) \in C^{\infty}(\overline{\Omega})$, for $\rho\in \R$,} 
\item[(A2)] {the map $\R\times \R^n \ni (\rho,\mu) \to\gamma(\cdot,\rho,\mu)$ is $C^\infty$ with values in the H\"{o}lder space $C^{1,\alpha}(\overline{\Omega};\R)$ for some $\alpha \in (0,1)$.}  
\end{enumerate} 

We have the following result.

\begin{thm}
\label{thm_well-posedness}
Let $\rho\in \R$ be fixed. Then  under the above assumptions, there exist $\delta>0$, $C>0$ such that for any $f\in  B_{\delta}(\p \Omega;\R):=\{f\in C^{2,\alpha}(\p \Omega;\R): \|f\|_{C^{2,\alpha}(\p \Omega;\R)}< \delta\}$, the problem \eqref{eq_app_ref_1} has a solution $u=u_{\lambda, f}\in C^{2,\alpha}(\overline{\Omega};\R)$ which satisfies
\[
\|u-\rho \|_{C^{2,\alpha}(\overline{\Omega};\R)}\le C\|f\|_{C^{2,\alpha}(\p \Omega;\R)}.
\]
The solution $u$ is unique within the class $\{u\in C^{2,\alpha}(\overline{\Omega};\R): \|u-\rho\|_{C^{2,\alpha}(\overline{\Omega};\R)}< C\delta \}$ and the map 
\[
B_\delta(\p \Omega;\R)\to C^{2,\alpha}(\overline{\Omega};\R), \quad f\mapsto  u,
\]
is $C^\infty$. Furthermore, the map
\begin{equation}
\label{eq_app_ref_2_normal}
B_\delta(\p \Omega;\R)\to C^{1,\alpha}(\overline{\Omega};\R), \quad f\mapsto \p_\nu u|_{\p \Omega}
\end{equation}
is also $C^\infty$.
\end{thm}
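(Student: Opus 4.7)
The plan is to apply the implicit function theorem for $C^\infty$ maps between real Banach spaces. I would introduce
\[
F: C^{2,\alpha}(\overline{\Omega};\R) \times C^{2,\alpha}(\p\Omega;\R) \to C^{\alpha}(\overline{\Omega};\R) \times C^{2,\alpha}(\p\Omega;\R),
\]
defined by $F(u,f) = \bigl(\nabla\cdot(\gamma(x,u,\nabla u)\nabla u),\ u|_{\p\Omega} - \rho - f\bigr)$, and observe that $F(\rho,0) = (0,0)$ trivially, since the constant function $u\equiv \rho$ has vanishing gradient. That $F$ indeed lands in the stated target uses that $C^{1,\alpha}(\overline{\Omega})$ is a Banach algebra: if $u\in C^{2,\alpha}$, then $\nabla u\in C^{1,\alpha}$ and $\gamma(\cdot,u,\nabla u)\in C^{1,\alpha}$ by (A2), so the product $\gamma\nabla u$ lies in $C^{1,\alpha}$ and its divergence in $C^\alpha$.

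The first nontrivial step is to verify that $F$ is $C^\infty$ in the Fr\'echet sense. All operations except the superposition $u\mapsto \gamma(\cdot,u,\nabla u)$ are continuous linear and hence automatically $C^\infty$; smoothness of the superposition on these H\"older spaces is the standard statement that a Nemytskii-type operator generated by a $C^\infty$ function with values in $C^{1,\alpha}$ is itself $C^\infty$, with derivatives computed by the chain rule. Next I would compute $D_u F(\rho,0)$: because $\nabla\rho=0$, every contribution produced by differentiating $\gamma(x,u,\nabla u)$ with respect to $u$ or $\nabla u$ carries a factor of $\nabla\rho$ and hence drops out, leaving
\[
D_u F(\rho,0) v = \bigl(\nabla\cdot(\gamma(x,\rho,0)\nabla v),\ v|_{\p\Omega}\bigr).
\]
By (A1), $\gamma(\cdot,\rho,0)\in C^\infty(\overline{\Omega})$ is strictly positive, so this is the Dirichlet problem for a linear elliptic operator in divergence form with smooth coefficients, and classical Schauder theory gives that $D_u F(\rho,0)$ is a Banach space isomorphism from $C^{2,\alpha}(\overline{\Omega};\R)$ onto $C^{\alpha}(\overline{\Omega};\R)\times C^{2,\alpha}(\p\Omega;\R)$.

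The implicit function theorem then furnishes $\delta>0$ and a $C^\infty$ map $f\mapsto u_{\rho,f}$ from $B_\delta(\p\Omega;\R)$ into a small $C^{2,\alpha}$-neighborhood of $\rho$, unique in that neighborhood, with $F(u_{\rho,f},f)=0$ and $u_{\rho,0}=\rho$; the estimate $\|u_{\rho,f}-\rho\|_{C^{2,\alpha}}\le C\|f\|_{C^{2,\alpha}(\p\Omega)}$ then follows from the mean value theorem along the segment from $0$ to $f$. For the last claim, the map $u\mapsto \p_\nu u|_{\p\Omega}$ is bounded linear from $C^{2,\alpha}(\overline{\Omega};\R)$ into $C^{1,\alpha}(\p\Omega;\R)$ (using that $\p\Omega$ is $C^\infty$, so $\nu$ is smooth), hence $C^\infty$, and composing with $f\mapsto u_{\rho,f}$ yields the smoothness of \eqref{eq_app_ref_2_normal}. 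The main technical point is the smoothness of the Nemytskii-type operator $u\mapsto \gamma(\cdot,u,\nabla u)$ on H\"older spaces; once that is granted, everything reduces to Schauder estimates and the abstract implicit function theorem.
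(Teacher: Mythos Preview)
Your proposal is correct and follows essentially the same route as the paper: define the map $F(u,f)=(\nabla\cdot(\gamma(x,u,\nabla u)\nabla u),\,u|_{\p\Omega}-\rho-f)$, check it is $C^\infty$ between the indicated H\"older spaces, compute $D_uF(\rho,0)v=(\nabla\cdot(\gamma(x,\rho,0)\nabla v),\,v|_{\p\Omega})$, invoke Schauder theory for the isomorphism, and apply the implicit function theorem. The only place where the paper adds detail beyond your outline is the smoothness of the superposition $u\mapsto\gamma(\cdot,u,\nabla u)$, which it verifies explicitly via a Taylor expansion with integral remainder rather than appealing to a general Nemytskii-operator result; otherwise the arguments coincide.
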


\begin{proof}
Following \cite[Proposition 2.1]{LLLSa20}, we shall make use of the implicit function theorem for $C^\infty$ maps between real Banach spaces, see \cite[Theorem 10.6]{RR06}. In doing so, we let 
\[
B_1=C^{2,\alpha}(\p \Omega;\R), \quad  B_2=C^{2,\alpha}(\overline{\Omega};\R), \quad B_3=C^{\alpha}(\overline{\Omega};\R)\times C^{2,\alpha}(\p \Omega;\R). 
\]
Consider the map,
\begin{equation}
\label{eq_app_ref_2}
F:B_1\times B_2\to B_3, \quad F(f,u)=(\nabla\cdot (\gamma(x,u,\nabla u)\nabla u), u|_{\p \Omega}-\rho-f).
\end{equation}
First we claim that  $F$ has the mapping property \eqref{eq_app_ref_2}.  Indeed, as $C^{1,\alpha}(\overline{\Omega};\R)$ is an  algebra under pointwise multiplication, see \cite[Theorem A.7]{Hormander_1976}, we only need to see that $\gamma(x,u,\nabla u)\in C^{1,\alpha}(\overline{\Omega};\R)$. This follows from the fact that if $v\in C^\infty(\R)$ and $w\in C^{1,\alpha}(\overline{\Omega};\R)$ then the composition $v\circ w\in C^{1,\alpha}(\overline{\Omega};\R)$, see  \cite[Theorem A.8]{Hormander_1976}. 

Let us check that the map $F$ in \eqref{eq_app_ref_2} is $C^\infty$. To that end, it suffices to check that the map 
\begin{equation}
\label{eq_app_ref_3}
C^{2,\alpha}(\overline{\Omega};\R)\ni u\mapsto \gamma(x,u,\nabla u)\in C^{1,\alpha}(\overline{\Omega};\R)
\end{equation}
is $C^\infty$. In doing so, letting $\lambda=(\rho,\mu)\in \R\times\R^n$, we Taylor expand $\gamma(x,\cdot)$ at $\lambda_0$, 
\begin{equation}
\label{eq_app_ref_4}
\gamma(x,\lambda_0+\lambda)=\sum_{|\beta|\le N}\frac{(\p_\lambda^\beta \gamma)(x,\lambda_0)}{\beta!} \lambda^\beta+R_{\lambda_0,\lambda},
\end{equation}
where the remainder $R_{\lambda_0,\lambda}$ is given by 
\begin{equation}
\label{eq_app_ref_5}
R_{\lambda_0,\lambda}:=
(N+1)\sum_{|\beta|=N+1} \frac{\lambda^\beta}{\beta!}\int_0^1 (1-t)^N(\p_\lambda^\beta\gamma)(x,\lambda_0+t\lambda)dt,
\end{equation}
$N\ge 0$.
Therefore,  to prove that the map \eqref{eq_app_ref_3} is $C^\infty$, letting $\lambda_0=(u(x), \nabla u(x))$, $u \in C^{2,\alpha}(\overline{\Omega};\R)$, be fixed,  and letting $\lambda=(h(x), \nabla h(x))$, $h\in C^{2,\alpha}(\overline{\Omega};\R)$, in \eqref{eq_app_ref_4} and \eqref{eq_app_ref_5}, we have to check that the map 
\begin{equation}
\label{eq_app_ref_6}
C^{2,\alpha}(\overline{\Omega};\R)\ni u \mapsto (\p_\lambda^\beta \gamma)(x,u, \nabla u)\in C^{1,\alpha}(\overline{\Omega};\R)
\end{equation}
is continuous for all $|\beta|\ge 0$, and 
\begin{equation}
\label{eq_app_ref_7}
R_{(u,\nabla u), (h,\nabla h)}=o((h,\nabla h)^N)\quad \text{in}\quad  C^{1,\alpha}(\overline{\Omega};\R),
\end{equation}
as $h\to 0$ in $C^{1,\alpha}(\overline{\Omega};\R)$. 
 The continuity of the map \eqref{eq_app_ref_6}  follows from the fact that if $v\in C^\infty(\R)$ then the map $C^{1,\alpha}(\overline{\Omega};\R)\ni w\mapsto v\circ w\in C^{1,\alpha}(\overline{\Omega};\R)$ is continuous, see  \cite[Theorem A.8]{Hormander_1976}. Now for $\|(h,\nabla h)\|_{C^{1,\alpha}(\overline{\Omega};\R)}\le 1$, we have $\|(\p_\lambda^\beta \gamma)(x,u(x)+t h(x), \nabla u(x)+t\nabla h(x))\|_{C^{1,\alpha}(\overline{\Omega};\R)}\le C(u)$, uniformly in $t\in (0,1)$, where  $C(u)>0$ is a constant which depends on $u$.  Therefore, 
 \[
 \|R_{(u,\nabla u), (h,\nabla h)}\|_{C^{1,\alpha}(\overline{\Omega};\R)}\le C\|(h,\nabla h)\|_{C^{1,\alpha}(\overline{\Omega};\R)}^{N+1}, 
 \]
showing \eqref{eq_app_ref_7}. 

Note that $F(0,\rho)=0$ and the partial differential $\p_uF(0,\rho): B_2\to B_3$ is given by 
\[
\p_uF(0,\rho) v=(\nabla\cdot (\gamma(x,\rho, 0)\nabla v), v|_{\p \Omega}).
\]
Writing the equation $\nabla\cdot (\gamma(x,\rho, 0)\nabla v)=0$ as $\Delta v+\nabla(\log \gamma(x,\rho,0)) \cdot \nabla v=0$, and using (A1),  we see from \cite[Theorem 6.15]{Gil_Tru_book} that the map  $\p_u F(0,\rho ): B_2\to B_3$ is a linear isomorphism.

An application of  the implicit function theorem, see \cite[Theorem 10.6 and Remark 10.5]{RR06}, shows that there exists  $\delta>0$ and a unique $C^\infty$ map $S: B_\delta(\p \Omega;\R)\to C^{2,\alpha}(\overline{\Omega};\R)$ such that $S(0)=\rho$ and $F(f, S(f))=0$ for all $f\in B_\delta(\p \Omega;\R)$. Letting $u=S(f)$ and using that $S$ is Lipschitz continuous and $S(0)=\rho$, we have
\[
\|u-\rho\|_{C^{2,\alpha}(\overline{\Omega};\R)}\le C\|f\|_{C^{2,\alpha}(\p \Omega;\R)}.
\]
Since the operation of taking the normal derivative and restricting it to the boundary is a linear map $C^{2,\alpha}(\overline{\Omega};\R)\to C^{1,\alpha}(\p \Omega;\R)$, 
\eqref{eq_app_ref_2_normal} follows. 
\end{proof}

\end{appendix}

\section*{Acknowledgements}

C.C. was supported by NSF of China under grant 11931011. A.F. gratefully acknowledges support from the Fields institute for research in mathematical sciences.  The work of Y.K. is partially supported by  the French National Research Agency ANR (project MultiOnde) grant ANR-17-CE40-0029. The research of K.K. is partially supported by the National Science Foundation (DMS 1815922). The research of G.U. is partially supported by NSF, a Walker Professorship at UW and a Si-Yuan Professorship at IAS, HKUST.

\end{document}